\pgfplotsset{compat=1.17}
\theoremstyle{plain}
\newtheorem{theorem}{Theorem}[section]
\newtheorem{prop}[theorem]{Proposition}
\newtheorem{lemma}[theorem]{Lemma}
\newtheorem{cor}[theorem]{Corollary}
\theoremstyle{remark}
\newtheorem{rem}[theorem]{Remark}
\newtheorem{ex}[theorem]{Example}
\theoremstyle{definition}
\newtheorem{definition}[theorem]{Definition}
\newcommand{\GL}{\text{\rm GL}}
\newcommand{\Stab}{\text{\rm Stab}}
\newcommand{\F}{\mathbb{F}}
\newcommand{\Z}{\mathbb{Z}}
\newcommand{\set}[1]{\left\{#1\right\}}
\newcommand{\Int}{\text{\bf I}}
\newcommand{\Barc}{{\text{\bf Bar}}}
\newcommand{\B}{\mathscr{B}}
\newcommand{\bR}{\mathbf{R}}
\newcommand{\bC}{\mathbf{C}}
\title{The Space of Barcode Bases for Persistence Modules}
\author{Emile Jacquard, Vidit Nanda and Ulrike Tillmann}
\date{}
\begin{document}
	\maketitle
	\begin{abstract}
	    The barcode of a persistence module serves as a complete combinatorial invariant of its isomorphism class. Barcodes are typically extracted by performing changes of basis on a persistence module until the constituent matrices have a special form. Here we describe a new algorithm for computing barcodes which also keeps track of, and outputs, such a change of basis. Our main result is an explicit characterisation of the group of transformations that sends one barcode basis to another. Armed with knowledge of the entire space of barcode bases, we are able to show that any map of persistence modules can be represented via a partial matching between bars provided that neither source nor target admits nested bars in its barcode. We also generalise the algorithm and results described above to work for zizag modules.
	\end{abstract}
	
	\section*{Introduction}
	
	Persistence modules appear in different forms and guises across many areas of mathematics. In recent years, particular interest and focus has come from their extensive use in topological data analysis (TDA) in general and persistent homology in particular. In this paper, we examine persistence modules of the form $(V_\bullet, f_\bullet)$: 	
    \begin{align} \label{equ:persitencemodule}
	\xymatrixcolsep{.5in}
	\xymatrix{
		V_{0} \ar@{->}[r]^-{f_1} & V_{1} \ar@{->}[r]^-{f_2} & \cdots  \ar@{->}[r]^-{f_{\ell-1}} & V_{\ell-1} \ar@{->}[r]^-{f_\ell} & V_{\ell},
	}
    \end{align}
where each  $V_i$ is a vector space of finite dimension $n_i$ (over an underlying field $\mathbb{F}$) 
and each $f_i:V_{i-1} \to V_i$ is a linear map. We study three different aspects. 

\subsection*{Computing Barcode Bases} The first is the central question of { finding a barcode basis} for $(V_\bullet,f_\bullet)$. This amounts to a choice of basis for each $V_i$ with respect to which the linear maps $f_i$ have a particularly nice form --- they admit at most a single $1$ in each row and column, with all other entries being $0$. The existence of such bases and matrix representations is well known \cite{gabriel}. We say that the matrices are in {\it barcode form} and  the corresponding  basis of $\bigoplus_{i=0}^\ell V_i$ is a {\it barcode basis}, since the barcodes familiar from TDA can easily be extracted. 

Algorithms to compute barcode  bases in TDA typically take as input a filtered chain complex as in \cite{comppers}, where one has recourse to matrix representations of the boundary operators. Algorithms for general persistence modules include the well-known \cite{zizag} and much more recently, \cite{hang2021correspondence} and  \cite{henselmanpetrusek2020matroids}. Here we present a new algorithm that takes as its input a matrix representation $A_\bullet = ( A_1, \dots , A_\ell)$ relative to some initial basis of the persistence module  $(V_\bullet, f_\bullet)$  and outputs a sequence $g= (g_0 , \dots, g_\ell)$ of change of basis matrices $g_i$ for each of the $V_i$ so that the new matrix representation $A'_\bullet = (A'_1, \dots , A'_\ell)$ with  $A_i' = g_i \cdot A_i \cdot g_{i-1}^{-1}$ is in barcode form. Our algorithm in Section \ref{subsection:algorithms} is explicit and elementary in the sense that every intermediate step amounts to performing standard (row or column) operations on the constituent $A_i$'s. The key difficulty here is that column operations on $A_i$ often force new matrix operations on $A_k$ for $k<i$, and similarly row operations on $A_i$ often require changes in $A_k$ for $k > i$. 

\subsection*{The Space of Barcode Bases} Our second goal is to {describe the set  of all barcode bases}\footnote{This set  has a natural topology when working over a field with topology such as $\mathbb R $ or $\mathbb C$.} of $(V_\bullet,f_\bullet)$. We show that this set can naturally be identified as the stabiliser of a  matrix representation $A_\bullet$ of $(V_\bullet, f_\bullet)$, and hence as  a subgroup of the product 
\[
G := \GL (n_0; \mathbb F)  \times \dots \times \GL (n_\ell; \mathbb F),
\] 
where $\GL(d;\F)$ indicates the general linear group of invertible $d \times d$ matrices with entries in $\F$. Writing 
$$
[i_1,j_1] \preceq [i_2,j_2] \quad \text{ whenever } \quad i_1 \leq i_2 \leq j_1 \leq j_2
$$
and $\text{Mat}(m \times n; \mathbb{F})$ for  the set of 
$m \times n$ matrices with coefficients in $\mathbb{F}$,
we show in Theorem \ref{theorem:stabbij} that the {set of barcode bases} is in one-to-one correspondence with
\[
\prod_{0 \leq i \leq j \leq \ell}  \GL(d_{ij};\mathbb{F})\times  \prod_{\substack{[i_1,j_1] \precneq [i_2,j_2] }} \text{\rm Mat}(d_{i_{1}j_{1} }\times d_{i_{2}j_{2}};\mathbb{F}).
\]
Here $d_{ij} $ is the multiplicity of the interval $[i,j]$ in the barcode of $(V_\bullet,f_\bullet)$. If the matrix representation is already in barcode form, then the elements in $\GL(d_{ij};\mathbb{F})$ correspond to changes of basis for the sub-vector space of $V_i$ spanned by the basis elements corresponding to the  bars $[i,j]$, and the elements in  $\text{\rm Mat}(d_{i_{1}j_{1} }\times d_{i_{2}j_{2}};\mathbb{F})$ represent the changes to basis vectors in $V_{i_2}$ corresponding to intervals $[i_2, j_2]$ obtained by adding vectors from $V_{i_2}$ which correspond to intervals $[i_1, j_1]$.

\subsection*{Simplifying Maps of Persistence Modules} We now turn attention to our third problem.  Given a map of persistence modules 
$$
\phi: (V_\bullet, f_\bullet) \to  (W_\bullet, h_\bullet),
$$
we seek { barcode bases for source and target in terms of which $\phi$ assumes its simplest form}, in the sense that we now specify. An interval  $[i,j]$ represents a submodule canonically isomorphic  to
the interval module $\Int  [i,j]_\bullet$. It is an elementary observation that such a module can be mapped non-trivially to another interval module $\Int [i', j']_\bullet$  if and only if $[i',j'] \preceq [i,j]$; and  in this case the non-zero map is unique up to a non-zero scalar.
In the simplest case, $\phi$ induces a partial matching where each bar in the source is mapped to exactly one in the target or mapped to zero. Surprisingly, we show in Theorem \ref{theorem:nonest} that such a partial matching exists (after a change of barcode bases) whenever neither source nor target admit a pair of strictly nested intervals in their respective barcode decompositions.\footnote{ Two bars
$[i, j]$ and $[i', j']$  are strictly 
nested if  $i<i'$ and $j'<j$.} In an example we also show that these conditions are necessary.

\subsection*{Zigzag Modules} Finally, we generalise the algorithm and theorems described above to {zizag modules} of a fixed type $\tau$: the linear maps of the persistence module \eqref{equ:persitencemodule} can go either forward $V_{i-1} \xrightarrow{f_i} V_{i}$ or backward $V_{i-1} \xleftarrow{q_i} V_{i}$ according to pattern fixed by $\tau$. Such modules are also classified in terms of sums of interval modules.
Our algorithm can be adapted to compute barcode bases of zizag modules of any type. 
Next, we introduce a generalisation of the order $\preceq$ that depends on the {type} $\tau$. This order takes into account that the order $\preceq$ has to be reversed when all the arrows in \eqref{equ:persitencemodule} are reversed.
With this order in place, we can once again classify the set of all barcode bases, see Theorem \ref{theorem:main}. Similarly when  considering maps of zizag modules, we  generalise the notion of strictly nested bars, which once again depends on the type $\tau$. Excluding such nested bars, we are able to obtain barcode bases of the source and target zizag modules in terms of which the map is described by a partial matching on the set of bars; see Theorem \ref{theorem:mainII}.

\subsection*{Outline}
We take the view that a persistence module $(V_\bullet, f_\bullet)$ is a quiver representation. In Section \ref{sec:barbasis}, we provide a constructive proof (and concomitant algorithm) of Gabriel's decomposition theorem for persistence modules. In Section \ref{sec:stabiliser} we identify the set of all barcode bases with a stabiliser of the action of the group $G$ on the set of all possible matrix representations of $(V_\bullet,f_\bullet)$. In Section \ref{sec:maps} we study maps between persistence modules by viewing them as representations of ladder quivers with relations. Gabriel's theorem no longer applies here; however, using similar arguments as in \cite{asashiba}, we are able to prove a finite decomposition when source and target have no nested bars. Finally, in Section \ref{sec:zigzag} we extend our results to any quiver of type A, that is zigzag persistence modules.

\subsection*{Related work} As mentioned above, there are several well known algorithms which compute barcodes of persistence modules. Some start with chains on a filtered simplicial complex, some deal with more general persistence modules and in some cases zizag modules. 

There are two algorithms that explicitly deal with computing the barcode bases associated to the interval decomposition. In \cite{carlsson2021persistent} the authors use matrix factorisation techniques to obtain bases in which the matrices are in echelon form. This technique also applies to zizag modules. In \cite{parallel} the authors inductively compute interval bases using basis completions techniques at each step, but they do not deal with the zizag case. Neither of these papers attempts to compute the set of barcode bases associated with the persistence module, and the algorithm we describe here takes a different approach to reducing matrices in barcode form.
Most recently, the authors of \cite{umatch} use U-match matrix factorisation to reduce computational complexity and memory storage in computing barcodes.

It was shown in \cite{escolar2015persistence} that maps between persistence modules of length less than 5 admit a tractable classification in the sense that the associated { ladder persistence modules} are always of finite type. In contrast, the authors of \cite{buchet2018realizations}  find an infinite class of indecomposable non-isomorphic ladder persistence module whenever the length is  greater then 5. In \cite{asashiba} the authors outline an algorithm which  computes the decomposition into a sum of indecomposables for ladder persistence modules of length $ < 5$.

\subsection*{Acknowledgement}
The authors are   members of the Centre for Topological Data Analysis, funded by the EPSRC grant EP/R018472/1. We are grateful to the two anonymous referees for their helpful comments and corrections.

	\section{Persistence Modules and Barcode Bases} \label{sec:setting}
	
	A \textbf{persistence module}, for the purposes of this paper, is a finite collection $(V_\bullet,f_\bullet)$ of finite-dimensional vector spaces $V_i$ over a field $\F$ along with $\F$-linear maps $f_i$ arranged as follows:
	\[
	\xymatrixcolsep{.5in}
	\xymatrix{
		V_{0} \ar@{->}[r]^-{f_1} & V_{1} \ar@{->}[r]^-{f_2} & \cdots  \ar@{->}[r]^-{f_{\ell-1}} & V_{\ell-1} \ar@{->}[r]^-{f_\ell} & V_{\ell}.
	}
	\]
	The number $\ell+1$ is called the {\em length} of $(V_\bullet,f_\bullet)$. The {\em  direct sum} of $(V_\bullet,f_\bullet)$ with another persistence module $(W_\bullet,h_\bullet)$ of the same length is defined pointwise --- in other words, the vector space at its $i$-th position is $V_i \oplus W_i$ for each admissible index $i$, and similarly the corresponding linear map is given by $f_i \oplus h_i$. We call $(V_\bullet,f_\bullet)$ {\em isomorphic} to $(W_\bullet,h_\bullet)$ if there are invertible linear maps $\phi_i:V_i \to W_i$ so that the square
	\[
	\xymatrixcolsep{.7in}
	\xymatrixrowsep{.4in}
	\xymatrix{
	V_{i-1} \ar@{->}[r]^{f_i} \ar@{->}[d]_{\phi_{i-1}}^{\sim} & V_{i} \ar@{->}[d]^{\phi_{i}}_{\sim} \\
	W_{i-1} \ar@{->}[r]_{h_i} & W_i 
	}
	\] 
	commutes for each index $i$ in $\set{1,\ldots,\ell}$. Isomorphisms from $(V_\bullet,f_\bullet)$ to itself are called {\em automorphisms}, and these evidently form a group under composition. We denote this group by $\text{Aut} (V_\bullet, f_\bullet)$.
	
	The {\bf interval module} corresponding to a pair of non-negative integers $i \leq j$ is the persistence module $\Int[i,j]_\bullet$ given by
	\[
	\xymatrixcolsep{.17in}
	\xymatrix{
		0 \ar@{->}[r] & \cdots \ar@{->}[r]  & 0 \ar@{->}[r] & \F \ar@{->}[r] & \cdots \ar@{->}[r]  & \F \ar@{->}[r] & 0 \ar@{->}[r] & \cdots \ar@{->}[r] & 0,
	}
	\]
	where the contiguous string of $\F$'s spans $\set{i,i+1,\ldots,j-1,j}$, all intermediate $\F\to\F$ maps are identities, and all other vector spaces are trivial. The importance of interval modules stems from the following result \cite{comppers}.
	
	\begin{theorem}\label{thm:indec}
		For each persistence module $(V_\bullet,f_\bullet)$ of length $\ell+1$, there exists a finite set of non-negative integer pairs
		\[\Barc(V_\bullet,f_\bullet) := \set{i_1 \leq j_1, \ldots, i_k \leq j_k},
		\] (with $[i_p,j_p] \subset [0,\ell]$  for all $1 \leq p \leq k$), called the {\bf barcode} of $(V_\bullet,f_\bullet)$, and an integer {\bf multiplicity} $d_{i_pj_p} > 0$ so that $(V_\bullet,f_\bullet)$ is isomorphic to a direct sum of interval modules:
		\begin{align} \label{eq:decomp}
		(V_\bullet,f_\bullet) \simeq \bigoplus_{p=1}^k \Int[i_p,j_p]_\bullet^{d_{i_pj_p}}.
		\end{align}
		Here the $i$-th summand on the right side is to be interpreted as the $d_{i_pj_p}$-fold direct sum of the interval module $\Int[i_p,j_p]_\bullet$ with itself.
	\end{theorem}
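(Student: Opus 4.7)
The plan is to proceed by induction on the length $\ell+1$. The base case $\ell = 0$ is immediate, since a single vector space $V_0$ of dimension $n_0$ is tautologically isomorphic to $\Int[0,0]_\bullet^{n_0}$.

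For the inductive step, I apply the inductive hypothesis to the truncated module $V_0 \to \cdots \to V_{\ell-1}$ to obtain an isomorphism with a direct sum of interval modules $\bigoplus_p \Int[i_p,j_p]_\bullet$, where necessarily every $j_p \leq \ell-1$. I fix the associated barcode basis; its elements in $V_{\ell-1}$ are the vectors $u_p$ arising from summands with $j_p = \ell-1$, and each $u_p$ carries a distinguished \emph{birth index} $i_p$. Simultaneously, for every time $t \in [i_p, \ell-1]$, the same summand contributes a basis vector $u_p^{(t)}$ of $V_t$ on which $f_{t}$ acts as the identity, so that a chosen modification of $u_p$ in $V_{\ell-1}$ can be propagated backwards through the $f_t$'s.

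The remaining task is to incorporate $f_\ell \colon V_{\ell-1} \to V_\ell$ into the decomposition by modifying the basis of $V_{\ell-1}$ (without disturbing the already-achieved barcode form of $f_1,\dots,f_{\ell-1}$) and by a free basis change on $V_\ell$. I first identify the permissible basis changes on $V_{\ell-1}$: these are exactly the elementary operations that extend to an automorphism of the truncated module. A direct calculation (equivalent to the identification of $\text{Hom}(\Int[i_p,\ell-1]_\bullet, \Int[i_q,\ell-1]_\bullet)$ with $\F$ when $i_q \leq i_p$ and with $0$ otherwise) shows that adding $c\,u_q$ to $u_p$ is permitted iff $i_q \leq i_p$; the corresponding automorphism is realised by also adding $c\,u_q^{(t)}$ to $u_p^{(t)}$ for each $t \in [i_p,\ell-2]$. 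With these permitted operations, together with arbitrary basis changes on $V_\ell$, I sort the $u_p$'s by increasing birth index and process them in order: at birth index $i$, I use operations within the group $\{u_p : i_p = i\}$ (freely available, since both directions are permitted) and subtractions of earlier-birth $u_q$'s to arrange that each image $f_\ell(u_p)$ either lies in the span of images already fixed at earlier birth indices (and can then be killed) or is linearly independent of them (and becomes a new basis vector of $V_\ell$, extending the corresponding bar to $\Int[i_p,\ell]_\bullet$). A final free basis change on $V_\ell$ completes the independent collection of retained images to a basis; the extra vectors contribute summands $\Int[\ell,\ell]_\bullet$, and any $u_p$ whose image was killed remains an $\Int[i_p,\ell-1]_\bullet$ summand.

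The main obstacle will be the bookkeeping needed to verify that the collection of basis vectors assembled through this procedure genuinely constitutes a barcode basis, i.e., that each family $\{u_p^{(t)}\}_t$ (together with its possible extension to time $\ell$) spans an interval module summand and that $f_1,\dots,f_\ell$ each end up with at most a single $1$ per row and column. This reduces to confirming that every permitted operation really preserves the barcode form of the earlier maps — which follows from the hom classification quoted above combined with the observation that the propagated modifications vanish at times $t < i_p$ and act as identities on the relevant one-dimensional subspaces elsewhere — and that the inductive reduction of $f_\ell$ terminates with the claimed form, which is straightforward given the triangular structure imposed by the birth-index ordering.
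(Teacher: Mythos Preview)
Your proposal is correct and, like the paper, inducts on the length and reduces the last map $f_\ell$ after the earlier ones are already in barcode form; but the key technical step is organised differently. The paper (Lemma~\ref{lem:matop} and Proposition~\ref{prop:gabcons}) first row-reduces $A_\ell$ freely over $V_\ell$, and then clears each remaining non-pivot entry by a column operation $\bC_{p\gets q}(-\alpha)$ on $A_\ell$; the substance of Lemma~\ref{lem:matop} is that the disturbance this induces on $A_{\ell-1}$ can be recursively propagated back through the earlier matrices without destroying their barcode form. You instead compute $\mathrm{Hom}(\Int[i_p,\ell-1]_\bullet,\Int[i_q,\ell-1]_\bullet)$ to identify \emph{in advance} exactly which elementary operations on the basis of $V_{\ell-1}$ extend to automorphisms of the truncated module, and then run a birth-index-ordered elimination on the domain side before completing a basis of $V_\ell$. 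In effect you are invoking a special case of the stabiliser description that the paper only develops later in Theorem~\ref{theorem:stabbij} to drive the existence proof. Both routes exploit the same underlying asymmetry---one may add an earlier-born bar's vector to a later-born one but not conversely---but the paper's packaging directly yields the explicit backward-propagation algorithm \textbf{CompPers}, whereas yours is more structural and makes the link with the Section~\ref{sec:stabiliser} analysis transparent from the outset.
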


	This {\em interval decomposition} theorem follows from Gabriel's foundational result on the decomposability of quiver representations \cite{gabriel} --- since $(V_\bullet,f_\bullet)$ is a representation of a type-${\bf A}_{\ell+1}$ quiver. Our goal here is to provide an explicit algorithm which not only furnishes such the isomorphism \eqref{eq:decomp}, but can also be readily implemented on a computer.

	To this end, fix a persistence module $(V_\bullet,f_\bullet)$ of length $\ell+1$ and set $n_i := \dim_\F V_i$ for each $i \in \set{0,\ldots,\ell}$. Without loss of generality, we may select a {\em basis family}
	\[
	\B := \set{B_i \subset V_i \mid 0 \leq i \leq \ell},
	\] where each $B_i$ forms an ordered basis for the  vector space $V_i$.
	This choice amounts to fixing an isomorphism $V_i \simeq \F^{n_i}$ for each $i$. Thus, every linear map $f_i:V_{i-1} \to V_i$ can be represented (in terms of the chosen bases $B_{i-1}$ and $B_i$ from $\B$) as a matrix $A_i$ of size $n_i \times n_{i-1}$ with entries in $\F$; consequently, $(V_\bullet,f_\bullet)$ is isomorphic to
	\begin{align}\label{eq:basemod}
	\xymatrixcolsep{.5in}
	\xymatrix{
		\F^{n_0} \ar@{->}[r]^-{A_1} & \F^{n_1} \ar@{->}[r]^-{A_2} & \cdots  \ar@{->}[r]^-{A_{\ell-1}} & \F^{n_{\ell-1}} \ar@{->}[r]^-{A_\ell} & \F^{n_\ell}.
	}
	\end{align}
	In light of Theorem \ref{thm:indec}, we are particularly interested in a special class of basis families.	
	\begin{definition}\label{def:barbase}
			An $m \times n$ matrix $A$ of rank $r$ is in \textbf{barcode form} if there exists a strictly increasing function $c:\set{1,\ldots,r} \to \set{1,\ldots,n}$ so that
	\[
	A_{ij} = \begin{cases}
	                1 & \text{if } j = c(i), \\
	                0 & \text{otherwise.}
	            \end{cases}
	\]
	\end{definition}
		
Thus, a matrix is in barcode form whenever its entries lie in $\set{0,1}$, with at most one non-zero term in each row and column, and the $r$ non-zero terms appear in the first $r$ rows and in strictly increasing column order.

A basis family $\B$ is called an {\bf barcode basis} for $(V_\bullet,f_\bullet)$ if all of the $A_i$ are in barcode form. The natural basis arising from an interval decomposition of a persistence module is a barcode basis.

\begin{ex}
Consider, for instance, the persistence module of length $4$ given by the barcode containing $0 \leq 3$ along with $0 \leq 1$ and $1 \leq 3$, each with multiplicity one:
		
		\begin{tikzpicture}[line cap=round,line join=round,>=triangle 45,x=1cm,y=.4cm]
		\clip(-8.853170731707314,2) rectangle (8.551707317073165,5.56487804878048);
		\draw [line width=2pt] (-4,5)-- (-2,5);
		\draw [line width=2pt] (-2,5)-- (0,5);
		\draw [line width=2pt] (0,5)-- (2,5);
		\draw [line width=2pt] (-4,4)-- (-2,4);
		\draw [line width=2pt] (-2,3)-- (0,3);
		\draw [line width=2pt] (0,3)-- (2,3);
		\begin{scriptsize}
		\draw [fill=black] (-4,5) circle (2.5pt);
		\draw [fill=black] (-2,5) circle (2.5pt);
		\draw [fill=black] (0,5) circle (2.5pt);
		\draw [fill=black] (2,5) circle (2.5pt);
		\draw [fill=black] (-4,4) circle (2.5pt);
		\draw [fill=black] (-2,4) circle (2.5pt);
		\draw [fill=black] (-2,3) circle (2.5pt);
		\draw [fill=black] (0,3) circle (2.5pt);
		\draw [fill=black] (2,3) circle (2.5pt);
		\end{scriptsize}
		\end{tikzpicture}
		With respect to the basis family obtained by ordering these  intervals from top to bottom, the matrices $A_i$ are given by
		\[
		A_{1}= \begin{blockarray}{cc}
		\begin{block}{[cc]}
		1 & 0 \\ 
		0 & 1 \\ 
		0 & 0  \\
		\end{block}  
		\end{blockarray}
		 \quad A_{2}= \begin{blockarray}{ccc}
		\begin{block}{[ccc]}
		1 & 0 & 0 \\ 
		0 & 0 &1 \\ 
		\end{block}  
		\end{blockarray}, \quad A_{3} = \begin{blockarray}{cc}
		\begin{block}{[cc]}
		1 & 0 \\ 
		0 & 1 \\ 
		\end{block}  
		\end{blockarray} ,
		\]
		and all three are evidently in barcode form. Conversely, one can also recover the interval decomposition immediately from these three matrices.
		\end{ex}
	
	To put our quest for a constructive proof of Theorem \ref{thm:indec} on a firm algebraic footing, let $X$ be the set of all the possible matrix-sequences $A_\bullet$ which can arise in \eqref{eq:basemod}. It is a (strict) subset of the product of matrices of the appropriate dimensions:
	\begin{align}\label{eq:X}
	X \subset \prod_{i=1}^\ell \text{Mat}\left(n_i \times n_{i-1};\F\right).
	\end{align}
	Writing $\GL(n;\F)$ for the group of all $n \times n$ invertible matrices over $\F$, consider the product
	\begin{align}\label{eq:G}
	G := \prod_{i=0}^\ell \GL(n_i;\F),
	\end{align}
	which acts naturally via a change-of-basis action on $X$: the group element $g := (g_0,\ldots,g_\ell)$ sends each matrix-sequence $A_\bullet$ in $X$ to the new sequence $(gA)_\bullet$ given by
	\begin{align}\label{eq:ga}
	(gA)_i := g_i \cdot A_i \cdot g_{i-1}^{-1}
	\end{align} 
    for each admissible index $i$. This is equivalent  to replacing the original basis family $\B = \set{B_i}$ with the new basis  family $g\B = \set{g_i B_i}$. Thus, $X$ is the free orbit of $A_\bullet$ under this $G$-action.  So our first task, solved in Section \ref{sec:barbasis}, translates to discovering some $g \in G$ that transforms a given basis family $\B$ of $(V_\bullet,f_\bullet)$ to a barcode basis. 
    
	\section{Constructing a Barcode Basis}\label{sec:barbasis}
	
	 Throughout this section, we fix a persistence module $(V_\bullet,f_\bullet)$ expressed as a sequence of matrices $A_\bullet$ as in \eqref{eq:basemod} with respect to an arbitrary (i.e., not necessarily barcode) basis family $\B$.

	\subsection{Barcode bases via elementary matrix operations}
	
	To conveniently describe relevant elements of $G$, we fix notation for matrices which implement certain fundamental row and column operations.
	
	\begin{definition}\label{def:emat}
		For each dimension $n > 0$, distinct indices $1 \leq p,q \leq n$, and scalar $\lambda \in \F$, let $e_{p,q}^n(\lambda)$ denote the {\bf elementary matrix} in $\GL(n;\F)$ which has $1$'s all along its diagonal, $\lambda$ in the $(p,q)$-th position, and zeros everywhere else. 
	\end{definition}
	Since the dimension $n$ will be clear from context, we omit it from the superscript and simply write $e_{p,q}(\lambda)$ to indicate the relevant elementary matrix. The following standard facts about such matrices will be freely used in the sequel ---
	\begin{enumerate}
		\item multiplying a matrix on the left by $e_{p,q}(\lambda)$ implements the following {\bf elementary row operation} 
		\[
		\text{Row}(p) \gets \text{Row}(p) + \lambda \cdot \text{Row}(q)
		\]
		which we denote $\bR_{p \gets q}(\lambda)$; similarly,
		\item multiplying a matrix on the right by $e_{p,q}(\lambda)$ implements the following {\bf elementary column operation}
		\[
		\text{Col}(q) \gets \text{Col}(q) + \lambda \cdot \text{Col}(p),
		\]
		which we denote $\bC_{q \gets p}(\lambda)$; and finally,
		\item the inverse of $e_{p,q}(\lambda)$ is $e_{p,q}(-\lambda)$.
	\end{enumerate}

	\begin{rem}\label{rem:seqops} Consider the element $g = (g_0,\ldots,g_\ell) \in G$ for which $g_i = e_{p,q}(\lambda)$ and all the other $g_j$ are identity matrices. The action of this $g$ on a given matrix sequence $A_\bullet$ is to simultaneously perform $\bC_{p \gets q}(\lambda)$ on $A_i$ and $\bR_{q \gets p}(-\lambda)$ on $A_{i-1}$ while leaving all the other $A_j$'s invariant.
	\end{rem}

	The following result plays an essential part in our constructive proof of Theorem \ref{thm:indec}. In its statement and beyond, we will use $A(p,q)$ to indicate the entry in the $p$-th row and $q$-th column of a given matrix $A$.

	\begin{lemma}\label{lem:matop}
		Assume that the first $\ell-1$ matrices $\set{A_i \mid 1 \leq i < \ell}$ of \eqref{eq:basemod} are in barcode form, and that the last matrix $A_\ell$ has a pivot in the $(r,q)$ position, i.e., $A_{\ell}(r,q)=1$ and all other entries in the $q$-th column are zero. If there is a nonzero entry $\alpha := A_\ell(r,p)$ in the same row $r$ but some other column $p > q$, then there exists $g \in  G$ with $g_{\ell}=\text{\rm Id}$ so that $(gA)_\bullet$ equals $A_\bullet$ except $A_\ell$ where the $\alpha$ entry is replaced by zero.
	\end{lemma}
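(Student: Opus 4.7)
The approach is to construct $g$ by a backward cascade of elementary matrix adjustments, starting at level $\ell$ and propagating through the preceding matrices. Set $g_\ell := \text{Id}$ and $g_{\ell-1} := e_{q,p}(\alpha)$. Under the formula $(gA)_\ell = g_\ell A_\ell g_{\ell-1}^{-1}$, this makes $A_\ell$ undergo right-multiplication by $e_{q,p}(-\alpha)$, which implements $\bC_{p \gets q}(-\alpha)$. Because the pivot hypothesis forces column $q$ of $A_\ell$ to equal the standard basis vector $\mathbf{e}_r$, this column operation strips exactly the entry $\alpha$ at position $(r,p)$ and leaves every other entry of $A_\ell$ untouched.

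The cost is that $g_{\ell-1}$ simultaneously acts on $A_{\ell-1}$ by left-multiplication, performing the row operation $\bR_{q \gets p}(\alpha)$. If row $p$ of $A_{\ell-1}$ is identically zero, this is trivial and we set all remaining $g_i = \text{Id}$. Otherwise, since $A_{\ell-1}$ is in barcode form and $q < p$, both $q$ and $p$ must be pivot rows with pivot columns $t < s$ respectively (the ordering uses the strict monotonicity of the pivot-column function $c$, together with the fact that the pivot rows occupy an initial segment, so $b$ being a pivot row combined with $a < b$ forces $a$ to be a pivot row as well). Defining $g_{\ell-2} := e_{t,s}(\alpha)$ makes its inverse act on $A_{\ell-1}$ on the right as $\bC_{s \gets t}(-\alpha)$. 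The key cancellation is that, in barcode form, each of these two elementary operations touches only the single entry $A_{\ell-1}(q,s)$: the row operation increments it by $+\alpha$ while the column operation increments it by $-\alpha$, so their combined effect on $A_{\ell-1}$ is nil and $(gA)_{\ell-1} = A_{\ell-1}$.

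The construction then iterates. At each step $i$ from $\ell - 1$ down to $1$, given $g_i = e_{a,b}(\alpha)$ with $a < b$, inspect row $b$ of $A_i$: if it is zero set all remaining $g_j = \text{Id}$ and halt; otherwise, let $a' < b'$ denote the pivot columns of rows $a$ and $b$ in $A_i$ and set $g_{i-1} := e_{a',b'}(\alpha)$. The same one-entry cancellation argument on $A_i$ gives $(gA)_i = A_i$, and the invariant $a < b$ is inherited at the next step from the strict monotonicity of the pivot-column function. The cascade necessarily terminates, either when some row-zero case is reached or by the time we assign $g_0$, since there is no matrix below $A_1$ to propagate into.

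The main technical obstacle is to verify this cancellation rigorously and to maintain the $a < b$ invariant at every step so that both rows remain pivot rows of the current matrix; without the hypothesis $p > q$, the initial step would already fail. The rest of the proof is a direct check of the formulas $(gA)_i = g_i A_i g_{i-1}^{-1}$ at each level, which reduces to comparing the one-entry perturbations produced by left- and right-multiplication by the elementary matrices we have prescribed, together with a bookkeeping argument that the product $g = (g_0, \ldots, g_\ell)$ so constructed lies in $G$ and has the required properties.
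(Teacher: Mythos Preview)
Your argument is correct and follows essentially the same route as the paper: both perform $\bC_{p\gets q}(-\alpha)$ on $A_\ell$ and then propagate the induced row operation backward through the earlier matrices, using the barcode form to guarantee that at each level only a single entry is perturbed and can be cancelled by the next elementary basis change. The paper packages this as an induction on $\ell$ (invoking the inductive hypothesis to clean up the perturbed $A_{\ell-1}$), whereas you unroll the same recursion into an explicit cascade $g_{\ell-1}, g_{\ell-2}, \ldots$; the content is identical.
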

	\begin{proof} We proceed by induction on $\ell$, noting that the case $\ell=1$ is immediately true since there is only one matrix in sight. Assume that the statement holds up to $\ell-1$. The $r$-th row of $A_\ell$ contains a pivot $1$ in the $q$-th column and some $\alpha \neq 0$ in the $p$-th column. To eliminate this offending $\alpha$, we perform $\bC_{p \gets q}(-\alpha)$ on $A_\ell$ by performing the basis change $e_{p,q}(-\alpha)$ on $V_{\ell-1}$. Since $A_\ell(r,q)$ is assumed to be a pivot, the only resulting difference in $A_\ell$ is that the $(r,p)$-th entry changes from $\alpha$ to $0$. But by Remark \ref{rem:seqops}, we are also compelled to perform $\bR_{q \gets p}(\alpha)$ on the preceding matrix $A_{\ell-1}$. This results in a new matrix $A'_{\ell-1}$, and there are now 2 cases to consider, of which only the second requires the inductive hypothesis:
		
	{\bf Case 1:} if the $p$-th row of $A_{\ell-1}$ is identically zero, then our row operation has had no effect whatsoever; thus, $A'_{\ell-1} = A_{\ell-1}$ is still in barcode form and we have arrived at the desired result.

	\textbf{Case 2}: If the $p$-th row of $A_{\ell-1}$ is nonzero,  then since $q<p$ and $A_{\ell-1}$ is in barcode form, we see that the $q$-th row of $A_{\ell-1}$ must also be non-zero.  Then by Definition \ref{def:barbase} they must have pivot ones in distinct columns, say {$c$ and $d$ respectively}, and furthermore $c < d$. Thus, after we have performed $\bR_{q \gets p}(\alpha)$ on $A_{\ell-1}$, the resulting matrix $A'_{\ell-1}$ has the form 
	\[A'_{\ell-1} = ~
	\begin{blockarray}{cccccccc}
	& c & & & & d\\
	\begin{block}{[ccccccc]c}
	0 & 1 & 0 & \cdots &0 & \alpha & 0 & q \\
	& 0& & & & 0 \\
	& \vdots & & & & \vdots \\
	& 0& &  & & 0\\
	0& 0&0 & \cdots& &1 & 0 & p  \\
	\end{block}
	\end{blockarray}
	\]
	By induction, there exists a $g \in \prod_{i=0}^{\ell-1} \GL_{n_i}(\mathbb{F})$ with $g_{\ell-1}=\text{\rm Id}$ so that $g_{i}A_{i}g^{-1}_{i-1}$ is still in barcode form for $1 \leq i \leq \ell-2$, and 
	\[g_{\ell-1}A_{\ell-1}g^{-1}_{\ell-2}=
		\begin{blockarray}{cccccccc}
	& c & & & & d\\
	\begin{block}{[ccccccc]c}
	0 & 1 & 0 & \cdots &0 & 0 & 0 & q \\
	& 0& & & & 0 \\
	& \vdots & & & & \vdots \\
	& 0& &  & & 0\\
	0& 0&0 & \cdots& &1 & 0 & p  \\
	\end{block}
	\end{blockarray}\] is again in barcode form. Furthermore, since $g_{\ell-1}=\text{\rm Id}$, the matrix $A_{\ell}$ is left unchanged by this change of basis. The desired basis change is $(g_0,g_1, \dots, g_{\ell-2},e_{p,q}(-\alpha),\text{\rm Id})$.
	\end{proof}

\begin{prop}\label{prop:gabcons}
Given the sequence of matrices $A_\bullet$ as in (6),  
there is a $g \in G$ such that $(gA)_\bullet$ has all its matrices in barcode form.
\end{prop}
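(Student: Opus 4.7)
The natural approach is induction on the length $\ell$, with the base case $\ell = 0$ being vacuous (no matrices to reduce). For the inductive step, the truncation $(A_1, \ldots, A_{\ell-1})$ is itself a shorter persistence module, so by hypothesis there is an element $(g_0, \ldots, g_{\ell-1})$ of the corresponding smaller product of general linear groups whose action puts these matrices into barcode form. Extending by $g_\ell := \text{\rm Id}$ yields an element of $G$ which reduces everything except possibly $A_\ell$ to barcode form, at the cost of replacing $A_\ell$ by $A_\ell \cdot g_{\ell-1}^{-1}$. It therefore suffices to show that once $A_1, \ldots, A_{\ell-1}$ are in barcode form, one can act by a further element of $G$ to put $A_\ell$ itself into barcode form without disturbing the earlier matrices.

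To do this, I process the rows of $A_\ell$ from top to bottom, greedily choosing pivot columns in strictly increasing order. At step $r$, with $c_{\mathrm{prev}}$ denoting the column of the previous pivot (or $0$ initially), I locate the leftmost column $c > c_{\mathrm{prev}}$ containing a nonzero entry in some row $r' \geq r$, terminating if no such column exists. A row swap performed among rows $\geq r$ brings that entry to position $(r, c)$, and rescaling row $r$ makes it equal to $1$. Elementary row operations $\bR_{r' \gets r}$ for $r' > r$ then zero out the rest of column $c$. All of these manipulations correspond to basis changes in $V_\ell$ alone, affecting only $A_\ell$ (since there is no $A_{\ell+1}$), so the earlier matrices remain in barcode form. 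The rows of $A_\ell$ above $r$ are undisturbed because each such row $r''$ already had its unique nonzero in some column $c_{r''} < c$, so the entry $(r'', c)$ was zero to begin with.

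At this point the entry $(r, c)$ is a pivot in the sense of Lemma \ref{lem:matop}: a $1$ in an otherwise zero column. I invoke that lemma repeatedly to clear, one at a time, every nonzero entry in row $r$ strictly to the right of column $c$. Each invocation modifies $A_\ell$ only by zeroing the targeted entry (so column $c$ remains a clean pivot column for the next invocation), and preserves the barcode form of $A_1, \ldots, A_{\ell-1}$ by construction. I then increment $r$, update $c_{\mathrm{prev}} := c$, and repeat. The procedure terminates in at most $\min(m, n)$ iterations where $A_\ell$ has size $m \times n$, and the resulting $A_\ell$ satisfies Definition \ref{def:barbase} since the pivots lie in rows $1, 2, \ldots$ with strictly increasing column indices. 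The composition of all the basis changes performed --- those from the inductive hypothesis together with those from the row-by-row reduction of $A_\ell$ --- furnishes the desired $g \in G$. The main potential obstacle is that column operations on $A_\ell$ could in principle corrupt the earlier matrices' barcode form, and this is precisely the obstruction that Lemma \ref{lem:matop} was engineered to circumvent.
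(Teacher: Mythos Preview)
Your proof is correct and follows essentially the same strategy as the paper's: induct on $\ell$, extend the inductive basis change by the identity on $V_\ell$, exploit the fact that row operations on $A_\ell$ are free (no downstream matrix), and invoke Lemma~\ref{lem:matop} for every column operation needed to clear non-pivot entries in pivot rows. The only cosmetic difference is that the paper first puts $A_\ell$ into reduced row echelon form in one pass and then applies Lemma~\ref{lem:matop} to all surviving non-pivot entries, whereas you interleave the two phases row by row; both orderings are valid and rely on the same invariant.
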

\begin{proof}
When $\ell=1$, we may diagonalise the matrix $A_{1}$ via standard row and column operations. Proceeding by induction for $\ell > 1$, assume the existence of some group element
\[
g' = (g_0, \dots, g_{\ell-1}) \in \prod_{i=0}^{\ell-1}\GL_{n_{i}}(\mathbb{F})
\] satisfying the following property: the matrices
$g_{i}A_{i}g_{i-1}^{-1}$ are in barcode form for $1 \leq i \leq \ell-1$. 

Consider $g = (g',\text{Id}_{n_{\ell}})$, which evidently lies in $G$. Replacing $A_\bullet$ by $(gA)_\bullet$ if necessary, we may assume that $A_\bullet$ has its first $\ell-1$ matrices in barcode form. Performing row operations on $A_{\ell}$ has no impact on the previous matrices, as it corresponds to multiplying $A_{\ell}$ on the left by some $g_{\ell}$. Thus, we may  assume without loss of generality that all previous matrices are in barcode form while $A_\ell$ itself is in reduced row echelon form. By Lemma \ref{lem:matop}, there is a basis change $g \in G$ which zeroes out each non-pivot entry whilst maintaining the barcode form of the previous matrices. Applying these basis changes gets us to the desired barcode basis.
\end{proof}

 \begin{rem} If $\B$ is the basis family with respect to which $(V_\bullet, f_\bullet)$ has matrix form $A_\bullet$, then $g\B$ is a barcode basis where $g \in G$ is as in Proposition \ref{prop:gabcons}. We may therefore regard it as a constructive analogue of Theorem \ref{thm:indec}.
 \end{rem}
 
	\subsection{Algorithms}\label{subsection:algorithms} Here we describe algorithms which implement the constructions of Lemma \ref{lem:matop} and Proposition \ref{prop:gabcons}. In particular, the main algorithm \text{CompPers} described below accepts as input an initial sequence of matrices $A_\bullet$ as in \eqref{eq:basemod} and puts them in barcode form. The sub-computations which we require frequently have been isolated into concomitant subroutines, described as follows.
	
	 \begin{enumerate}
	     \item The first subroutine \textbf{ColOp} implements the inductive strategy underlying our proof of Lemma \ref{lem:matop}; in particular, this algorithm acts as step $k$ of the inductive procedure described in the proof of that lemma.
	 
	    \item The second subroutine $\textbf{Reduce}$ takes as input a sequence $A_\bullet$ for which the first $\ell-1$ matrices are in barcode form together with an invertible matrix $g \in \prod_{i=0}^{\ell-1}\GL(n_{i};\mathbb{F})$. It then reduces the final matrix $A_\ell$ until it is in barcode form, {\em while maintaining the barcode form of all previous matrices} and suitably updating the basis change $g$.
	 	
	    \item Finally, the main algorithm $\textbf{CompPers}(A_\bullet)$  takes as input an arbitrary sequence of matrices $A_\bullet$ and produces as output  $g \in G$ together with $(gA)_\bullet$ in barcode form. From these matrices we can directly access all intervals in barcode of $(V_\bullet,f_\bullet)$.
	  \end{enumerate}
	
	\begin{algorithm}
		\SetAlgoLined
		\KwInput{$A_\bullet$, $g$, $k$, $r$, $q$, $p$}
		\KwOutput{Updated $A_\bullet$ and basis change $g$, zeroing out $A_{k}(r,p)$}  
		$\bC_{p \gets q}(-\alpha)$ on $A_k$ \\
				$\bR_{q \gets p}(\alpha)$ on $A_{k-1}$ \\
		$g_{k}=e_{p,q}(-\alpha)$ \\
		\If{$k=0$ or $p$-th row of $A_{k-1}=0$}{ 
			return ($A_\bullet,g$)  
		}
		
		\Else{
			Find pivot columns $c<d$ of the pivot rows $q<p$ of $A_{k-1}$ \\
			return (\textbf{ColOp} ($A_\bullet,g,k-1,q,c,d$) )
		}
		\caption{\textbf{ColOp} }
		
	\end{algorithm}

	\begin{algorithm}
		\SetAlgoLined
		\KwInput{$A_\bullet$, $g$, where $A_\bullet$ has its first $\ell-1$ matrices in reduced form}
		\KwOutput{$A_\bullet$ in reduced barcode form and updated $g$}  
		row reduce ($A_{\ell})$ \\
		Append $g$ with corresponding $g_{\ell}$\\
		\While{there are $A_{\ell}(r,p) \neq 0$ terms with $A_{\ell}(r,q)=1$  a pivot}
		{$A_\bullet,h=$\textbf{ColOp}$(A_\bullet, g, \ell, r, q, p)$   \\
			$g=hg$ }
		
		return ($A_\bullet,g$)
		
		\caption{\textbf{Reduce} }
	\end{algorithm}

	\begin{algorithm}
	\label{alg:compers}

		\SetAlgoLined
		\KwInput{$A_\bullet$}
		\KwOutput{Reduced $A_\bullet$ with corresponding change of basis $g$}  
		$A'_\bullet=\left\lbrace A_{1} \right\rbrace$\\
		$g=(\text{\rm Id}_{n_{0}})$ \\
		\For{ $1 \leq i \leq \ell-1$}{
			$A'_\bullet,g=$\textbf{Reduce}$(A_\bullet',g)$\\
			$A_\bullet'$.append($A_{i+1}g_{i}^{-1}$) 
		}

		return (\textbf{Reduce}($A'_\bullet,g$))
		
		\caption{\textbf{CompPers}} 
	\end{algorithm}  

\begin{rem}
	The computational complexity of \textbf{CompPers}($A_\bullet$) can be expressed in terms of $n=\max_{ 0 \leq i \leq \ell} n_{i}$ and $\ell$. The cost of placing all the $A_i$ in reduced row echelon form via Gaussian elimination is $O(n^{3}\ell)$. Furthermore, performing column operations to further reduce these matrices requires at most $O(n^{2})$ operations on each matrix. And column operations on $A_{i}$ will, in the worst case, require down-stream column operations on $A_{i-1} \dots A_{1}$. Thus, for column operations, we have a 
	$O(n^{2} \sum _{i=1}^{\ell} i)=O(n^{2}\frac{\ell^(\ell-1)}{2})=O(n^{2}\ell^{2})$
	complexity. Combining these factors, the total complexity of the algorithm is 
	\[
	O(n^{3}\ell+n^{2}\ell^{2}).
	\]
    At each step of the algorithm, we perform an elementary basis change on a single vector space $V_i$, which amounts to multiplying a matrix $g_i \in \GL(V_i)$ by an elementary matrix., This incurs an $O(n)$ cost; thus, if we also wish to keep track of the basis changes, then the total complexity of {\textbf{CompPers}} becomes 
	 \[
	O(n^{4}\ell+n^{3}\ell^{2}).
	\]
\end{rem}	

We conclude with an illustrative example of how {\bf CompPers} acts on a sequence of input matrices.
	
	\begin{ex}
 Consider 	\[A_\bullet=
     \left(\left[\begin{matrix}
		1 & 0 & 0\\ 
		0 & 1 & 0\\ 
		0 & 0 & 0
		\end{matrix}\right], 
		\left[\begin{matrix}
		1 & 0 & 0\\ 
		0 & 1 & 0\\ 
		0 & 0 & 1
		\end{matrix}\right],
		\left[\begin{matrix}
		1 & 0 & 1\\ 
		0 & 1 & 1\\ 
		0 & 0 & 0
		\end{matrix}\right]\right)
		\]
	To put $A_3$ in barcode form, we must zero out the terms $A_{3}(1,3)$ and $A_{3}(2,3)$ using column operations. At each step, we will be performing row and column operations on matrices of $A_\bullet$, amounting to basis changes on the vectors spaces $V_{0}, V_{1}$ and $V_{2}$. For conciseness sake, we will not keep track of the basis changes done along the way, and will simply be performing operations on the matrices to put them in barcode form. 
	
	We begin by zeroing out the $A_{3}(2,3)$ term.
	
	\textbf{1}: $\bC_{3 \gets 2}(-1)$ on $A_3$, inducing  $\bR_{2 \gets 3}(1)$ on $A_2$, giving us matrices 
	\[
		 \begin{blockarray}{ccc}
		\begin{block}{[ccc]}
		1 & 0 & 0\\ 
		0 & 1 & 0\\ 
		0 & 0 & 0 \\
		\end{block}  
		\end{blockarray}, \;
		 \begin{blockarray}{ccc}
		\begin{block}{[ccc]}
		1 & 0 & 0 \\ 
		0 & 1 &1 \\
		0 & 0 &1 \\
		\end{block}  
		\end{blockarray}, \;
		\begin{blockarray}{ccc}
		\begin{block}{[ccc]}
		1 & 0 & 1 \\ 
		0 & 1 & 0\\
		0 & 0 & 0\\
		\end{block}  
		\end{blockarray} 
		\]
		
	\textbf{2}:  $\bC_{3 \gets 2}(-1)$ on $A_2$, inducing  $\bR_{2 \gets 3}(1)$. We see here that the third row of $A_{1}$ is zero, so we are in \textbf{Case 1} of Lemma \ref{lem:matop}, and so we are done.
	
	We have achieved our goal of zeroing out $A_{3}(2,3)$ whilst keeping the previous matrices in barcode form, making no other changes to $A_{3}$. It remains to zero out the $A_{3}(1,3)$ term.
	
	\textbf{1}:  $\bC_{3 \gets 1}(-1)$ on $A_3$, inducing  $\bR_{1 \gets 3}(1)$ on $A_2$ giving us matrices 
	\[
		 \begin{blockarray}{ccc}
		\begin{block}{[ccc]}
		1 & 0 & 0\\ 
		0 & 1 & 0\\ 
		0 & 0 & 0 \\
		\end{block}  
		\end{blockarray}, \;
		 \begin{blockarray}{ccc}
		\begin{block}{[ccc]}
		1 & 0 & 1 \\ 
		0 & 1 &0 \\
		0 & 0 &1 \\
		\end{block}  
		\end{blockarray}, \;
		\begin{blockarray}{ccc}
		\begin{block}{[ccc]}
		1 & 0 & 0 \\ 
		0 & 1 & 0\\
		0 & 0 & 0\\
		\end{block}  
		\end{blockarray} 
		\]\\
	
	\textbf{2:} $\bC_{3 \gets 1}(-1)$ on $A_2$, inducing  $\bR_{1 \gets 3}(1)$ on $A_1$. Since the third row of $A_{1}$ is zero, this operations has no impact on $A_{1}$, giving us matrices 
\[
	 \begin{blockarray}{ccc}
		\begin{block}{[ccc]}
		1 & 0 & 0\\ 
		0 & 1 & 0\\ 
		0 & 0 & 0 \\
		\end{block}  
		\end{blockarray}, \;
		 \begin{blockarray}{ccc}
		\begin{block}{[ccc]}
		1 & 0 & 0 \\ 
		0 & 1 &0 \\
		0 & 0 &1 \\
		\end{block}  
		\end{blockarray}, \;
		\begin{blockarray}{ccc}
		\begin{block}{[ccc]}
		1 & 0 & 0 \\ 
		0 & 1 & 0\\
		0 & 0 & 0\\
		\end{block}  
		\end{blockarray} 
		\]
	and so we are done. 

	\end{ex}
	
	\bigskip
	
	\section{The Space of Barcode Bases}\label{sec:stabiliser}
	
	Let $A_\bullet \in X$ be a sequence of matrices as in \eqref{eq:basemod} arising from an arbitrary choice of  basis for some persistence module $(V_\bullet,f_\bullet)$. Consider the group $G$ from  \eqref{eq:G}, recalling that $G$ acts on $X$ via change of basis. Our quest to describe all possible barcode bases for $(V_\bullet,f_\bullet)$ begins with a formula for the stabiliser of the chosen matrices $A_\bullet$ under this $G$-action. Namely, we seek the subgroup of $G$ given by
	\begin{align}\label{eq:stabA}
	\Stab(A_\bullet) := \set{g \in G \mid g_i\cdot A_i \cdot g_{i-1}^{-1} = A_i \text{ for all } 1 \leq i \leq \ell}.
	\end{align} To describe $\Stab(A_\bullet)$, we employ two binary relations on the set of all intervals which might possibly arise in the barcode decomposition of $(V_\bullet,f_\bullet)$ \`a la Theorem \ref{thm:indec}. 
	
	\begin{definition}\label{def:binrel}
	 Let $\preceq$ be the binary relation on $\set{[i,j] \in \Z^2 \mid  0 \leq i \leq j \leq \ell}$ given by
	  \[
	            [a,b] \preceq [c,d] \text{ whenever } a \leq c \leq b \leq d.
	 \]
	 (Although this relation $\preceq$ is reflexive and anti-symmetric on its domain, it is not transitive and hence does not form a partial order.)
	\end{definition}
	The second binary relation is the standard lexicographic order.
	\begin{definition}\label{def:lexord}
	        Let $\trianglelefteq$ be the lexicographic ordering on $\set{[i,j] \in \Z^2 \mid  0 \leq i \leq j \leq \ell}$, given by 
	        \begin{center}
	        $ [a,b] \trianglelefteq [c,d] \quad \Longleftrightarrow \quad a < c $ or $a=c$ and $b \leq d$.
	        \end{center}
	        	\end{definition}
    This yields a total order on the set of all possible bars in the interval decomposition of our persistence module $(V_\bullet, f_\bullet)$. 
    
    \begin{rem} The binary relation $\preceq$ is compatible with the lexicographical order $\trianglelefteq$ in the sense that $[a,b] \preceq [c,d]$ implies $[a,b] \trianglelefteq [c,d]$. 
    \end{rem}
    
    Given a barcode basis $\B$, we may totally order its bars using the lexicographic order, arbitrarily ordering bars with the same start and end point. This in turn yields a natural ordering of the bases $B_i$. The matrix representation of such bases is unique. Indeed, the first $d_{0i}$ basis vectors of $B_i$ are part of $[0,i]$ bars, then the next $d_{0,i+1}$ basis vectors are those part an $[0,i+1]$ bar, and so on following the lexicographic ordering until finally the $d_{i,\ell}$ basis vectors part of an $[i,\ell]$ bar. This yields a matrix representation for which $A_i$ is of the form
    \begin{align} \label{eq:obform}
        A_i=
    \begin{blockarray}{ccccccc}
    \begin{block}{[ccccccc]}
       M_0 & & & & &\\
       & M_1 & & & &\\
       & & \ddots & &\\
       & & & & M_{i}\\
    \end{block}
    \end{blockarray},
    \end{align}
    
    where
    \[M_j=\begin{blockarray}{ccccccc}
    \begin{block}{[ccccccc]}
    & &\text{Id}_{d_{j,i}} & & & &\\
    &0& & \text{Id}_{d_{j,i+1}} & & & \\
    && & & \ddots & & \\
    && & & & & \text{Id}_{d_{j,\ell}}\\
    \end{block}
    \end{blockarray}
    \]
    is a matrix of dimension $(\sum_{k=i}^{\ell}d_{jk}) \times (\sum_{k=i-1}^{\ell} d_{jk})$. As such, bases that have been ordered in the above way are barcode bases in the usual sense; we call these {\bf ordered} barcode bases of $(V_\bullet,f_\bullet)$ and devote the remainder of this section to completely characterising them.

	\begin{theorem}\label{theorem:stabbij}
	For each pair $[i,j]$ in $\set{0,1,\ldots,\ell}$ with $i \leq j$, let $d_{ij}$ equal the multiplicity of $i \leq j$ in the barcode of $(V_\bullet,f_\bullet)$, with the understanding that $d_{ij} = 0$ whenever $[i,j]$ is not in $\Barc(V_\bullet,f_\bullet)$. Then there is a bijection of sets:
	\[
	\Stab(A_\bullet) ~ \cong ~ \prod_{[i,j]}\GL(d_{ij};\mathbb{F})\times \hspace{-.22in} \prod_{[i_1,j_1] \precneq [i_2,j_2] } \hspace{-.2in} \text{\rm Mat}(d_{i_{1}j_{1} }\times d_{i_{2}j_{2}};\mathbb{F}).
	\]
	(The induced group structure on the right side is given in Corollary \ref{cor:stabmult} below)
	\end{theorem}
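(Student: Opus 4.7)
The plan is to first reduce to the case where $A_\bullet$ is already in ordered barcode form \eqref{eq:obform}. This reduction is justified because stabilisers at points in the same $G$-orbit are conjugate subgroups, hence in bijection as sets: if $A'_\bullet = h \cdot A_\bullet$ then $\Stab(A'_\bullet) = h\, \Stab(A_\bullet)\, h^{-1}$. Proposition~\ref{prop:gabcons} ensures that the orbit of any $A_\bullet$ contains an ordered barcode representative, so this costs nothing. With this assumption, each $V_i$ acquires a canonical splitting $V_i = \bigoplus_{[a,b] \ni i} W_i^{[a,b]}$ with $\dim_{\F} W_i^{[a,b]} = d_{ab}$, and each $f_i$ restricts to the identity $W_{i-1}^{[a,b]} \to W_i^{[a,b]}$ when $b \geq i$ and to zero when $b = i-1$.

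Next I would write each $g_i \in \GL(n_i;\F)$ as a block matrix whose $([a,b],[c,d])$-block $(g_i)^{[a,b],[c,d]}$ lies in $\text{\rm Mat}(d_{ab}\times d_{cd};\F)$, indexed over pairs of bars containing $i$. The stabiliser equation $g_i A_i = A_i g_{i-1}$ then translates block-by-block. A short case analysis on how each bar sits relative to $i$ and $i-1$ (passing through both, starting at $i$, or ending at $i-1$) yields the two defining constraints: (i) $(g_i)^{[a,b],[c,d]} = 0$ whenever $[a,b] \not\preceq [c,d]$, which is forced either because $a > c$ (bar $[a,b]$ enters after $[c,d]$ is already present) or because $b > d$ (bar $[a,b]$ outlives $[c,d]$); and (ii) whenever $[a,b] \preceq [c,d]$, the block $(g_i)^{[a,b],[c,d]}$ is independent of $i$ as $i$ ranges over the common lifespan $[c,b]$.

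With (i) and (ii) in hand, the bijection follows by a parameter count: each constant block gives one matrix in $\text{\rm Mat}(d_{ab}\times d_{cd};\F)$ for each pair $[a,b] \preceq [c,d]$. To see why diagonal pairs $[a,b] = [c,d]$ yield $\GL(d_{ab};\F)$ rather than merely $\text{\rm Mat}(d_{ab}\times d_{ab};\F)$, I would order bars lexicographically. The remark preceding the theorem tells us that $[a,b] \preceq [c,d]$ implies $[a,b] \trianglelefteq [c,d]$, so by (i) each $g_i$ becomes block upper triangular in this ordering, with diagonal blocks $(g_i)^{[a,b],[a,b]}$. Block upper triangular matrices are invertible iff their diagonal blocks are, which pins down the diagonal parameters to $\GL(d_{ab};\F)$ and leaves the off-diagonal parameters unconstrained. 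Conversely, given any tuple on the product side, assembling the blocks into matrices $g_i$ produces a stabilising element, since constancy across the common range guarantees the stabiliser equation.

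The main obstacle is carrying out the case analysis behind (i) and (ii) cleanly. One has to track the behaviour of each bar at every transition between $i-1$ and $i$, and to propagate the resulting block equalities along the lifespan of each pair of bars; in effect, one repeatedly applies the interaction of $g_i$ with $f_i$ until the constraints stabilise. Once this is in place, the remaining work --- verifying invertibility, identifying the $\GL$ and $\text{\rm Mat}$ factors, and constructing the inverse --- is purely formal.
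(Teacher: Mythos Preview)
Your proposal is correct and follows essentially the same approach as the paper: reduce to ordered barcode form, translate the stabiliser equation $g_i A_i = A_i g_{i-1}$ into constraints on the blocks of $g_i$ indexed by pairs of bars, derive the vanishing constraint for $[a,b] \not\preceq [c,d]$ and the constancy constraint across the common lifespan, and then use the lexicographic ordering to see each $g_k$ as block upper triangular with invertible diagonal blocks. The paper carries out the same analysis at the level of individual matrix entries (via the pivot function $c_i$ and sequences $\{p_k\}$) before grouping them into the submatrices $g_{[i_1,j_1]}^{[i_2,j_2]}$, whereas you work in blocks from the outset, but this is purely organisational.
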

	\begin{proof}
		Elements in the same orbit have isomorphic stabilisers, so without loss of generality we may assume $A_\bullet$ is given by the matrix representation of the linear maps in a ordered barcode basis. An element $g = (g_0,\ldots,g_\ell)$ of $G$ lies in $\Stab(A_\bullet)$ if and only if we have an equality of matrix products
		\[
		g_i\cdot A_i  = A_i \cdot g_{i-1}
		\]
		for each $i \in \set{1,\ldots,\ell}$. Set $k_i := \text{rank }A_i$ and note that since $A_i$ is in barcode form, there is a strictly increasing function $c_i:\set{1,\ldots,k_i} \to \set{1,\ldots,n_{i-1}}$ so that the unique nonzero entry in the $p$-th row of $A_i$ lies in column $c_i(p)$. The product $g_i\cdot A_i$ on the left side of our equality has as its $q$-th column either the $c_i^{-1}(q)$-th column of $g_i$ (if $q$ lies in the image of $c_i$), or is identically zero otherwise. Conversely, for $p \leq k_i$ the matrix $A_i \cdot g_{i-1}$ on the right side has as its $p$-th row the $c(p)$-th row of $g_{i-1}$, and its rows corresponding to $p > k_i$ are identically zero. 
		
		Therefore, requiring these two products to be equal amounts to imposing three types of constraints on the entries of $g_{i-1}$ and $g_i$:
		\begin{enumerate}
		    \item $g_{i}(p,q) = 0$  whenever $p > k_i \geq q$.
			\item $g_{i-1}(p,q) = 0$ whenever $p \in \text{Img}(c_i)$ and $q \notin \text{Img}(c_i)$. 
			\item $g_{i-1}(c_i(p),c_i(q)) = g_i(p,q)$ whenever both $p$ and $q$ are $\leq k_i$. 
		\end{enumerate}
		Recalling that $(V_\bullet,f_\bullet)$ is the persistence module represented by $A_\bullet$, we have a bijection 
		\[
		\left[\begin{matrix}
		\text{intervals $[i,j]$ in the} \\
		\text{barcode of $(V_\bullet,f_\bullet)$}
		\end{matrix}\right]
		\stackrel{\simeq}{\longleftrightarrow}
		\left[\begin{matrix}
		\text{sequences $\set{p_k \mid i \leq k \leq j}$ with} \\
		\text{$c_k(p_k) = p_{k-1}$ for $i+1 \leq k \leq j$}
		\end{matrix}\right]		
		\]
		Let $[i_1,j_1]$ and $[i_2,j_2]$ be two intervals in the barcode decomposition of $(V_\bullet,f_\bullet)$, and denote their corresponding sequences by $\set{p_\bullet}$ and $\set{q_\bullet}$. It follows from constraint (3) above that $g_k(p_k,q_k)$ remains constant whenever $k$ ranges over the indices in $[i,j] := [i_1,j_1] \cap [i_2,j_2]$. In other words, we have
		\begin{align}\label{eq:gconst}
		g_k(p_k,q_k) = g_{k'}(p_{k'},q_{k'}) \text{ for all } k,k' \in [i,j].
		\end{align} The following observation is crucial.
		
		\noindent {\bf Claim:} The entry $g_k(p_k,q_k)$ is zero for all $k \in [i,j]$ whenever $[i_1,j_1] \not\preceq [i_2,j_2]$. 
		
		\noindent To prove this claim, note that if $i_2 < i_1$ holds then $p_{i_1} > k_{i_1} \geq q_{i_1}$, so $g_{i_1}(p_{i_1},q_{i_1}) = 0$ by constraint (1) above. Thus the claim extends to all $k$ in $[i,j]$ by \eqref{eq:gconst}. Similarly, if $j_2 < j_1$ then $p_{j_2} \in \text{\rm Img}(c_{j_2})$ but $q_{j_2} \not\in \text{\rm Img}(c_{j_2})$, whence $g_{j_2}(p_{j_2},q_{j_2}) = 0$ by constraint (2). Once again, this extends to all $k \in [i,j]$ by \eqref{eq:gconst}, and so the claim is proved.
		
		Returning to the main argument, for each  pair 
		$[i_1,j_1] \preceq [i_2,j_2]$ of intervals in the barcode of $(V_\bullet,f_\bullet)$, we may select some $k \in [i_1,j_1] \cap [i_2,j_2]$. We denote by $g_{[i_1,j_1]}^{[i_2,j_2]}$ the submatrix of $g_k$ spanned by all entries $g_k(p_k,q_k)$ for which  $\set{p_\bullet}$ and $\set{q_\bullet}$ are sequences corresponding to intervals of type $[i_1,j_1]$ and $[i_2,j_2]$ respectively. Thus, $g_{[i_1,j_1]}^{[i_2,j_2]}$ has exactly $d_{i_1j_1}$ rows and $d_{i_2j_2}$ columns; and from \eqref{eq:gconst} we know that it forms a submatrix of $g_k$ for all $k$ in $[i_1,j_1] \cap [i_2,j_2]$. It follows from our claim that each $g_k$ is block upper-triangular:
		\begin{align}\label{eq:gkmatrix}
		\renewcommand{\arraystretch}{2}
	g_{k}=
	\left[
	\begin{array}{cccccccccc}
	g_{[0,k]}^{[0,k]} & g_{[0,k]}^{[0,k+1]} & \cdots  & \cdots & g_{[0,k]}^{[0,\ell]} & 0 & \cdots & \cdots& \cdots & 0\\
	0 & g_{[0,k+1]}^{[0,k+1]} & \cdots & \cdots & g_{[0,k+1]}^{[0,\ell]} & 0& \cdots & \cdots& \cdots & 0\\
	0 & 0& \ddots & \vdots & \vdots & \vdots & \vdots & \vdots & \vdots & \vdots\\
	\vdots & \vdots & 0& \cdots& \cdots & \cdots & \cdots & 0 &g_{[k,\ell-1]}^{ [k,\ell-1]} & g_{[k,\ell-1]}^{[k,\ell]}\\
	0 & 0&  \cdots & \cdots & \cdots & \cdots & \cdots& 0&  0 & g_{[k,\ell]}^{[k,\ell]} \\
	\end{array}
	\right]
	\end{align}
	The fact that $g_k$ must be invertible forces the diagonal blocks to be invertible, while the off-diagonal blocks remain entirely unconstrained. The map
    \[
    \Stab(A_\bullet) \to \prod_{[i,j]}\GL(d_{ij};\mathbb{F})\times \hspace{-.22in} \prod_{[i_1,j_1] \precneq [i_2,j_2] } \hspace{-.2in} \text{\rm Mat}(d_{i_{1}j_{1} }\times d_{i_{2}j_{2}};\mathbb{F}),
    \]
    which sends each $g$ to this distinguished collection of invertible $g_{[i,j]}^{[i,j]}$ and arbitrary $g_{[i_1,j_1]}^{[i_2,j_2]}$ furnishes the desired bijection.
	\end{proof}
	Using the block upper triangular form of the matrices $g_k$ described in the argument above, we may immediately obtain the group structure of $\Stab(A_\bullet)$.
	
	\begin{cor}\label{cor:stabmult}
	For $g,h \in \Stab(A_\bullet)$, we have
	\[
	(gh)_{[i_1,j_1]}^{[i_2,j_2]}= \sum_{[a,b]} g_{[i_1,j_1]}^{[a,b]} h_{[a,b]}^{[i_2,j_2]},
	\]
	with the sum being indexed over intervals that satisfy $[i_1,j_1] \preceq [a,b] \preceq [i_2,j_2]$.
	\end{cor}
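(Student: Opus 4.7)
The plan is to compute the product $(gh)_k$ directly at a well-chosen index $k$ and then read off the relevant block using the description of $\Stab(A_\bullet)$ obtained in the proof of Theorem \ref{theorem:stabbij}. Since $[i_1,j_1] \preceq [i_2,j_2]$ means $i_1 \leq i_2 \leq j_1 \leq j_2$, the intersection $[i_1,j_1] \cap [i_2,j_2] = [i_2,j_1]$ is nonempty; fix any $k$ in this intersection. Because $gh \in \Stab(A_\bullet)$, its defining block $(gh)_{[i_1,j_1]}^{[i_2,j_2]}$ is the submatrix of $(gh)_k = g_k \cdot h_k$ indexed by the rows and columns associated to intervals $[i_1,j_1]$ and $[i_2,j_2]$.

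Next I would apply the standard block-multiplication rule to the block upper-triangular matrices $g_k$ and $h_k$ displayed in \eqref{eq:gkmatrix}, whose block indices range over intervals $[a,b]$ containing $k$. This gives
\[
(gh)_{[i_1,j_1]}^{[i_2,j_2]} = \sum_{[a,b] \ni k} g_{[i_1,j_1]}^{[a,b]} \, h_{[a,b]}^{[i_2,j_2]}.
\]
Now I would invoke the Claim from the proof of Theorem \ref{theorem:stabbij}: the block $g_{[i_1,j_1]}^{[a,b]}$ vanishes unless $[i_1,j_1] \preceq [a,b]$, and similarly $h_{[a,b]}^{[i_2,j_2]}$ vanishes unless $[a,b] \preceq [i_2,j_2]$. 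Hence only pairs with $[i_1,j_1] \preceq [a,b] \preceq [i_2,j_2]$ contribute. Finally, any such $[a,b]$ satisfies $a \leq i_2 \leq k \leq j_1 \leq b$, so the condition $[a,b] \ni k$ is automatically fulfilled and may be dropped from the summation index.

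The main thing to verify, which is really the only nontrivial bookkeeping, is that the expression is independent of the particular choice of $k \in [i_2,j_1]$; this is immediate because $gh \in \Stab(A_\bullet)$ and hence satisfies the compatibility relation \eqref{eq:gconst}, which forces the submatrices extracted at different indices $k$ to coincide. No further step is genuinely difficult: the corollary is essentially the statement that multiplication in $\Stab(A_\bullet)$ is inherited from ordinary block matrix multiplication once one accepts the vanishing pattern established in Theorem \ref{theorem:stabbij}.
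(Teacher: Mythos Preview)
Your argument is correct and is exactly the approach the paper has in mind: the paper simply asserts that the multiplication rule follows ``immediately'' from the block upper-triangular form \eqref{eq:gkmatrix}, and you have spelled out that immediate step carefully, including the verification that every $[a,b]$ with $[i_1,j_1] \preceq [a,b] \preceq [i_2,j_2]$ necessarily contains $k$. There is nothing to add or correct.
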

	
	 If $\mathbb{F}$ is the field of real or complex numbers, as a subgroup of $\prod_{i=0}^{\ell}\GL_{n_{i}}(\mathbb{F})$,  the group $\Stab(A_\bullet)$ is a Lie group. Theorem \ref{theorem:stabbij} immediately allows us to obtain its dimension.
	\begin{cor} If $\mathbb{F}$ is the field of real or complex numbers,
		$\Stab(A_\bullet)$ is a Lie group of dimension \[\sum_{[i_1,j_1] \preceq [i_2,j_2] }d_{i_{1}j_{1}}d_{i_{2}j_{2}}\]
	\end{cor}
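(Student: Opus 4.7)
The plan is to deduce both the Lie group structure and the dimension formula directly from Theorem \ref{theorem:stabbij}. First, I would observe that $\Stab(A_\bullet)$ is the preimage of the single point $A_\bullet$ under the continuous (in fact smooth, indeed polynomial) action map $G \times X \to X$, $(g, B_\bullet) \mapsto (gB)_\bullet$, with one coordinate fixed. Hence $\Stab(A_\bullet)$ is a closed subgroup of the Lie group $G = \prod_i \GL(n_i;\mathbb{F})$, so by Cartan's closed subgroup theorem it is automatically a Lie subgroup.

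Next, I would upgrade the set-bijection of Theorem \ref{theorem:stabbij} to a diffeomorphism. In one direction, reading off the blocks $g_{[i,j]}^{[i,j]}$ and $g_{[i_1,j_1]}^{[i_2,j_2]}$ from a stabiliser element is a linear (hence smooth) projection onto specified matrix entries. In the reverse direction, the inverse assembles these blocks into each matrix $g_k$ according to the block upper-triangular pattern \eqref{eq:gkmatrix}, which is also a linear (and therefore smooth) map. Since both maps are smooth, the bijection is a diffeomorphism onto its image, and the dimension of $\Stab(A_\bullet)$ equals the dimension of the product on the right-hand side.

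The dimension count is then routine. Each factor $\GL(d_{ij};\mathbb{F})$ contributes $d_{ij}^2$ to the $\mathbb{F}$-dimension, and each factor $\mathrm{Mat}(d_{i_1j_1} \times d_{i_2j_2};\mathbb{F})$ contributes $d_{i_1j_1}\,d_{i_2j_2}$. Summing,
\[
\dim \Stab(A_\bullet) = \sum_{[i,j]} d_{ij}^2 + \sum_{[i_1,j_1] \precneq [i_2,j_2]} d_{i_1j_1}\,d_{i_2j_2}.
\]
The first sum collects precisely the contributions of the diagonal case $[i_1,j_1] = [i_2,j_2] = [i,j]$ in the relation $\preceq$, so combining it with the second yields
\[
\dim \Stab(A_\bullet) = \sum_{[i_1,j_1] \preceq [i_2,j_2]} d_{i_1j_1}\,d_{i_2j_2},
\]
as claimed. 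No real obstacle arises; the only point worth flagging is that ``dimension'' here is the $\mathbb{F}$-dimension of the underlying manifold, matching the convention in which $\GL(n;\mathbb{C})$ is a complex Lie group of dimension $n^2$, so that the stated formula is uniform in the choice $\mathbb{F} \in \{\mathbb{R},\mathbb{C}\}$.
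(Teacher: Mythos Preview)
Your proof is correct and follows the same approach as the paper, which simply states that $\Stab(A_\bullet)$ is a Lie group as a (closed) subgroup of $\prod_i \GL(n_i;\mathbb{F})$ and that the dimension formula follows immediately from Theorem~\ref{theorem:stabbij}. You have supplied the routine details (Cartan's theorem, smoothness of the bijection, and the dimension count) that the paper leaves implicit.
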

	As stated at the start of this section, our task here is to determine the space of all possible ordered barcode bases for a given persistence module $(V_\bullet, f_\bullet)$.
	
	We denote by $\mathbb{B}=\set{ \B=(B_i)_{0 \leq i \leq \ell} \; \middle| \; B_i \subset V_i \; \text{is an ordered basis}}$ the set of all possible ordered bases of $(V_\bullet, f_\bullet)$. Having fixed an initial basis $\B \in \mathbb{B}$, we know the group $G$ from \eqref{eq:G} acts freely and transitively on the set $\mathbb{B}$, so that any element of $\mathbb{B}$ may be expressed as $g \B$ for some unique $g \in G$. Thus, once we have fixed an initial basis $\B$, the set $\mathbb{B}$ may be identified with $G$. Then as subset of $\mathbb{B}$, the set of all possible ordered barcode bases of $(V_\bullet, f_\bullet)$ can be identified as a subset of the group $G$. 
	
	Recall,  $X$ is the set of all possible matrix-sequences  as defined in \eqref{eq:X}. For each possible basis $\B \in \mathbb{B}$, we define $A(\B)_\bullet \in X$ to be the matrix representation of the linear maps $f_\bullet$ in the chosen basis $\B$. This assignment prescribes the {\em matrix representation map} 
	\[
	A()_\bullet: \mathbb{B} \mapsto X,
	\] and finding all possible ordered barcode bases for $(V_\bullet, f_\bullet)$ amounts to determining all bases $\B \in \mathbb{B}$ for which $A(\B)_\bullet$ is as in \eqref{eq:obform}. Furthermore, this map is equivariant in the sense that $A(g\B)_\bullet=(gA(\B))_\bullet$ for each $g \in G$, where $g$ acts via the basis action defined in \eqref{eq:ga}.

    The following result makes the link between the stabiliser of $A_\bullet$ and the set of ordered barcode bases of a persistence module $(V_\bullet, f_\bullet)$.
        \begin{prop} \label{prop:obbase} Given a persistence module $(V_\bullet, f_\bullet)$ together with an ordered barcode basis $\B$ with matrix representation $A_\bullet$, the set of all ordered barcode bases is given by the orbit $\Stab(A_\bullet) \B$.
    \end{prop}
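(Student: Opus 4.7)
The plan is to combine two ingredients: (i) every ordered barcode basis has the \emph{same} matrix representation $A_\bullet$ given in \eqref{eq:obform}, and (ii) the matrix representation map $A(\cdot)_\bullet \colon \mathbb{B} \to X$ is $G$-equivariant. Both ingredients have already been essentially recorded in the text preceding the proposition, so the proof amounts to stringing them together.

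First, I would invoke the uniqueness observation made just before Theorem~\ref{theorem:stabbij}: once we fix the lexicographic order $\trianglelefteq$ on bars (with arbitrary tiebreaking among bars sharing the same endpoints), the matrix representation of any ordered barcode basis is forced to take the block form \eqref{eq:obform}. The shape and entries of these blocks depend only on the multiplicities $d_{ij}$, hence only on $\Barc(V_\bullet,f_\bullet)$. In particular, if $\B'$ is any ordered barcode basis, then $A(\B')_\bullet = A_\bullet$.

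Second, since the $G$-action on $\mathbb{B}$ is free and transitive, every $\B' \in \mathbb{B}$ can be written uniquely as $\B' = g\B$ for some $g \in G$, and the equivariance $A(g\B)_\bullet = (gA)_\bullet$ noted in the paragraph introducing the matrix representation map implies
\[
A(g\B)_\bullet = A_\bullet \iff (gA)_\bullet = A_\bullet \iff g \in \Stab(A_\bullet).
\]
Combining this with the first step, $\B' = g\B$ is an ordered barcode basis if and only if $g \in \Stab(A_\bullet)$, which is precisely the claim that the set of ordered barcode bases equals the orbit $\Stab(A_\bullet)\cdot \B$.

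The only point requiring any care is the uniqueness assertion in the first step, since the definition of ordered barcode basis tolerates an arbitrary tiebreak among bars of identical type. But two such tiebreaks differ by a permutation of basis vectors within a block of identical bars, which fixes the block matrix \eqref{eq:obform} setwise; such permutations are themselves elements of $\Stab(A_\bullet)$ and therefore do not enlarge the orbit. Once this is noted, the remainder of the argument is a direct translation between the stabiliser condition on $G$ and the basis condition on $\mathbb{B}$.
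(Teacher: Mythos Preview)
Your proof is correct and follows essentially the same approach as the paper's: both use that every ordered barcode basis has the common matrix representation \eqref{eq:obform}, write $\B' = g\B$ via free transitivity, and then invoke equivariance to conclude $g \in \Stab(A_\bullet)$. Your version is in fact slightly more complete, since you explicitly record the ``if and only if'' (the paper only writes out the containment $\{\text{ordered barcode bases}\} \subset \Stab(A_\bullet)\B$) and add a careful remark on tiebreaks that the paper leaves implicit.
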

	\begin{proof}
	Let $\B'$ be another ordered barcode basis. As seen above, two ordered barcode bases have the same matrix representations \eqref{eq:obform}, so that $A(\B')_\bullet=A_\bullet$. As previously stated, $G$ acts freely and transitively on $\mathbb{B}$ so that there exists a unique $g \in G$ for which $\B'=g\B$.
    We then have $A_\bullet=A(\B)_\bullet=A( g \B')_\bullet=gA(\B')_\bullet=gA_\bullet$, so that $g \in \Stab(A_\bullet)$ which implies $\B' \in  \Stab(A_\bullet) \B$.
	\end{proof}
	
	As such, we may identify the set of all ordered barcode bases of a persistence module $(V_\bullet, f_\bullet)$ with $\Stab(A_\bullet)$, which was fully characterised in Theorem \ref{theorem:stabbij}.

\begin{rem}
The automorphism group $\text{Aut}_Q(M)$ of a representation $M$ of a general quiver $Q$ has been described, for instance in \cite[Section 2.2]{brion}. It is known that $\text{Aut}_Q(M)$ is a semi-direct product of the form 
\[
U \rtimes \prod_{i=1}^{r} \GL(m_i,\F).
\] Here $M=\bigoplus_{i=1}^{r}M_i^{m_i}$ is a decomposition of $M$ into indecomposable summands $M_i$, while $U$ is unipotent normal subgroup of $\text{Aut}_Q(M)$ (see \cite[Prop 2.2.1]{brion}). Viewed from this context, the main content of Theorem \ref{theorem:stabbij} is an explicit description of $U$ in the special case where $Q$ is a type-A quiver. In particular, $U$ is generated by matrices which have the form \eqref{eq:gkmatrix}, but with identity blocks along the diagonal. This explicit description of $U$ in the type-A case plays a crucial role in subsequent results which appear in this paper.
\end{rem}	

\section{Simplifying Maps of Persistence Modules}\label{sec:maps}
We now shift our interest to maps of persistence modules $\phi_\bullet : (V_\bullet, f_\bullet) \mapsto (W_\bullet,h_\bullet)$, where $V_\bullet$ and $W_\bullet$ are persistence modules of length $\ell+1$. We recall that each such $\phi_\bullet$ is a collection of linear maps $\phi_i : V_{i} \mapsto W_{i}$ satisfying $\phi_i \circ f_i=h_i \circ \phi_{i-1}$. In other words, the following diagram commutes:
\[
\xymatrixcolsep{.5in}
\xymatrixrowsep{.4in}
\xymatrix{
V_0 \ar@{->}[r]^{f_1} \ar@{->}[d]_{\phi_0} & V_1 \ar@{->}[r]^{f_2} \ar@{->}[d]_{\phi_1} & \cdots \ar@{->}[r]^{f_{\ell-1}}  & V_{\ell-1} \ar@{->}[r]^{f_\ell} \ar@{->}[d]_{\phi_{\ell-1}}& V_\ell \ar@{->}[d]_{\phi_{\ell}} \\
W_0 \ar@{->}[r]_{h_1} & W_1 \ar@{->}[r]_{h_2} & \cdots \ar@{->}[r]_{h_{\ell-1}} & W_{\ell-1} \ar@{->}[r]_{h_\ell} & W_\ell
}
\]
Our objective here is to show that if neither $V_\bullet$ nor $W_\bullet$ admits a pair of strictly nested bars in its barcode, then there exist barcode bases of $V_\bullet$ and $W_\bullet$ in which $\phi$ induces a {\em partial matching} of the bars. To make this precise, we first describe the nestedness condition.

\begin{definition}\label{def:nested} A bar $[i_2,j_2]$ is strictly nested in a bar $[i_1,j_1] $, denoted $[i_2,j_2] \subset [i_1,j_1]$ if $i_1 < i_2 \leq j_2 < j_1$. This is best represented as 

  \begin{tikzpicture}[line cap=round,line join=round,>=triangle 45,x=1cm,y=1cm]
		\clip(-10.853170731707314,3.75) rectangle (8.551707317073165,5.56487804878048);
		\draw [line width=2pt] (-4,5)-- (-2,5);
		\draw [line width=2pt] (-2,5)-- (0,5);
		\draw [line width=2pt] (-6,5)-- (-4,5);
		\draw [line width=2pt] (-4,4)-- (-2,4);
		
		\begin{scriptsize}
		\draw [fill=black] (-6,5) circle (2.5pt);
		\draw [color=black] (-6,5.35) node {$i_1$};
		\draw [fill=black] (-4,5) circle (2.5pt);
		\draw [fill=black] (-2,5) circle (2.5pt);
		\draw [fill=black] (0,5) circle (2.5pt);
		\draw [color=black] (0,5.35) node {$j_1$};
		\draw [fill=black] (-4,4) circle (2.5pt);
		\draw [color=black] (-4,4.35) node {$i_2$};
		\draw [fill=black] (-2,4) circle (2.5pt);
		\draw [color=black] (-2,4.35) node {$j_2$};
		\end{scriptsize}
		\end{tikzpicture}
\end{definition}
\noindent Note that two intersecting bars are strictly nested if and only if they are not related by $\preceq$.

Before stating the main result, we remark that the data of our map $\phi_\bullet$ can be interpreted as a representation of the {\em rectangle} quiver of length $\ell+1$:
\[
\xymatrixcolsep{.5in}
\xymatrixrowsep{.4in}
\xymatrix{
\bullet \ar@{->}[r]^{} \ar@{->}[d]_{} & \bullet \ar@{->}[r]^{} \ar@{->}[d]_{} & \cdots \ar@{->}[r]^{}  & \bullet \ar@{->}[r]^{} \ar@{->}[d]_{}& \bullet \ar@{->}[d]_{} \\
\bullet \ar@{->}[r] & \bullet \ar@{->}[r]_{} & \cdots \ar@{->}[r]_{} & \bullet \ar@{->}[r]_{} & \bullet
}
\]
Such representations have been called \textbf{ladder} persistence modules in the literature  \cite{escolar2015persistence}. When treating $\phi$ as a ladder persistence module, we will denote it $(V_\bullet, W_\bullet, \phi_\bullet)$. We may therefore seek to decompose $\phi_\bullet$ into a direct sum of indecomposable ladder persistence modules.  

\begin{definition} Three families of ladder persistence modules are defined below:
\begin{enumerate}
    \item Given intervals $[i_1,j_1] \preceq [i_2,j_2]$, denote by $\textbf{R}_{[i_1,j_1]_\bullet}^{[i_2,j_2]}$ the ladder persistence module where $V_\bullet$ is the interval module $\Int[i_2,j_2]_\bullet$ while $W_\bullet$ is the interval module $\Int[i_1,j_1]_\bullet$; all vertical maps are $1'$s whenever possible and $0$ otherwise:
    \[
    \xymatrixcolsep{.3in}
    \xymatrixrowsep{.4in}
    \xymatrix{
    & & \mathbb{F} \ar@{->}[r] \ar@{->}[d]  & \mathbb{F} \ar@{->}[r] \ar@{->}[d]  & \cdots \ar@{->}[r] & \mathbb{F}  \ar@{->}[r] \ar@{->}[d] & \cdots \ar@{->}[r]& \mathbb{F} \\
    \mathbb{F}  \ar@{->}[r] & \cdots \ar@{->}[r]& \mathbb{F} \ar@{->}[r]  & \mathbb{F} \ar@{->}[r]  & \cdots \ar@{->}[r]& \mathbb{F}   
    }
\]
  \item Given an interval $[i,j]$, let $\Int^+[i,j]_\bullet$ denote the ladder persistence module for which $V_\bullet$ is $\Int[i,j]_\bullet$ and $W_\bullet$ is $0$, with all vertical maps necessarily being $0$:
  \[
\xymatrixcolsep{.3in}
\xymatrixrowsep{.4in}
\xymatrix{
 & & \mathbb{F} \ar@{->}[r] \ar@{->}[d]  & \mathbb{F} \ar@{->}[r] \ar@{->}[d] & \cdots \ar@{->}[r]  & \mathbb{F} \ar@{->}[d]\\
0  \ar@{->}[r] & \cdots \ar@{->}[r]& 0 \ar@{->}[r]  & 0 \ar@{->}[r]  & \cdots \ar@{->}[r]& 0 \ar@{->}[r] & \cdots \ar@{->}[r]& 0    
}
\]
\item And finally, given an interval $[i,j]$, let $\Int^-[i,j]_\bullet$ be the ladder persistence module for which $V_\bullet$ is trivial while $W_\bullet$ is $\Int[i,j]_\bullet$, so once again all vertical maps are $0$:
\[
\xymatrixcolsep{.3in}
\xymatrixrowsep{.4in}
\xymatrix{
0  \ar@{->}[r] & \cdots \ar@{->}[r]& 0 \ar@{->}[r] \ar@{->}[d] & 0 \ar@{->}[r] \ar@{->}[d]  & \cdots \ar@{->}[r] & 0 \ar@{->}[r] \ar@{->}[d]& \cdots \ar@{->}[r]& 0 \\
& & \mathbb{F} \ar@{->}[r]   & \mathbb{F} \ar@{->}[r]  & \cdots \ar@{->}[r]  &  \mathbb{F}
}
\]
\end{enumerate}
\end{definition}

It is readily seen that these three families of ladder persistence modules are mutually non-isomorphic and indecomposable. Therefore, by the Krull-Schmidt theorem, if $\phi_\bullet$ were to decompose as a direct sum of modules sourced from these three families, then such a decomposition would be unique. From such a decomposition, we can obtain the desired partial matching of the source and target bars: the presence of each $\textbf{R}_{[i_1,j_1]_\bullet}^{[i_2,j_2]}$ summand matches a bar $[i_2,j_2]$ of $V_\bullet$ to a bar $[i_1,j_1]$ of $W_\bullet$, whilst the existence of $\Int^+[i,j]_\bullet$ (or $\Int^-[i,j]_\bullet$) summands reveals bars $[i,j]$ in $V_\bullet$ (or $W_\bullet)$ that are matched to $0$. With this in mind, here is the main result of this section.

 \begin{theorem} {\label{theorem:nonest}}
Let $(V_\bullet, W_\bullet, \phi_\bullet)$ be a ladder persistence module of length $\ell+1$ where neither $V_\bullet$ nor $W_\bullet$ admit a pair of strictly nested bars. Then there are integers $r_{[i_1,j_1]} ^{[i_2,j_2]}$ and $d_{ij}^\pm \in \mathbb{N}$ for which:
      \[(V_\bullet, W_\bullet, \phi_\bullet) \hspace{-.1em} \simeq \hspace{-.3em} \bigoplus_{[i_1,j_1] \hspace{-.3em} \preceq [i_2,j_2]}  \left(\textbf{R}_{[i_1,j_1]_\bullet}^{[i_2,j_2]}\right)^{r_{[i_1,j_1]}^{[i_2,j_2]}} \oplus \bigoplus_{i \leq j} \left(\Int^{+}[i,j]_\bullet\right)^{d_{ij}^{+}} \oplus \bigoplus_{i \leq j} \left(\Int^{-}[i,j]_\bullet\right)^{d_{ij}^{-}}
      \]
  (In Example \ref{example: bars counter example} below we show that the assumption precluding nested bars is necessary.)
  \end{theorem}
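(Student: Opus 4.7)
My plan is to prove existence of the decomposition by an algorithmic reduction exploiting the explicit description of barcode bases provided by Theorem \ref{theorem:stabbij}. After fixing initial ordered barcode bases for $V_\bullet$ and $W_\bullet$ via Proposition \ref{prop:gabcons}, the one-dimensionality of $\mathrm{Hom}(\Int[i_2,j_2]_\bullet, \Int[i_1,j_1]_\bullet)$ precisely when $[i_1,j_1] \preceq [i_2,j_2]$ (and its vanishing otherwise), combined with the commutation $\phi_i \circ f_i = h_i \circ \phi_{i-1}$ and the barcode form of $f_\bullet$ and $h_\bullet$, forces $\phi_\bullet$ to be encoded by a single block matrix $\Phi = (\Phi^{[i_2,j_2]}_{[i_1,j_1]})$; one block of size $d^{W}_{i_1 j_1} \times d^{V}_{i_2 j_2}$ for each pair with $[i_1,j_1] \preceq [i_2,j_2]$, and each block constant over the overlap of its two bars.

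Changing ordered barcode bases corresponds, via the equivariance noted just before Proposition \ref{prop:obbase}, to acting on $\Phi$ by an element $(G, H) \in \Stab(A^V_\bullet) \times \Stab(A^W_\bullet)$. By Theorem \ref{theorem:stabbij} each factor is generated by diagonal $\GL$-blocks together with off-diagonal $\mathrm{Mat}$-blocks indexed by $\preceq$-related pairs of bars. The diagonal pieces act by left- and right-multiplication on each individual block $\Phi^{[i_2,j_2]}_{[i_1,j_1]}$; an off-diagonal piece in $\Stab(A^V_\bullet)$ indexed by $\preceq$-related bars of $V_\bullet$ adds a scalar multiple of the block-column of the smaller $V$-bar to the block-column of the larger one; and symmetrically an off-diagonal piece in $\Stab(A^W_\bullet)$ subtracts a scalar multiple of the block-row of the larger $W$-bar from the block-row of the smaller one. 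The goal is to use these operations to bring $\Phi$ into a partial-matching normal form: each individual block a partial identity, and each block-column and block-row of the entire $\Phi$ containing at most one non-zero entry overall.

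The no-nested hypothesis provides exactly the combinatorial input needed for this reduction. A direct coordinate check shows that any two overlapping but non-nested bars are $\preceq$-comparable, so $\preceq$ restricts to a total order on the set of bars of $V_\bullet$ (and of $W_\bullet$) meeting any fixed time point. Proceed by successive pivoting: at each step pick a non-zero entry of some block $\Phi^{[i_2,j_2]}_{[i_1,j_1]}$ as a pivot, use the diagonal $\GL$-actions on both sides to rescale it to $1$ and place that block in partial-identity shape, then use the available off-diagonal pieces in $\Stab(A^V_\bullet)$ and $\Stab(A^W_\bullet)$ to eliminate all other non-zero entries in the pivot's block-row and block-column. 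The main technical obstacle is verifying termination: each off-diagonal move only acts along chains in $\preceq$, and the no-nested hypothesis guarantees these chains are totally ordered, providing a lex-style monovariant that decreases strictly at every step. Example \ref{example: bars counter example} exhibits an instance where, once strict nesting is permitted, such loops genuinely appear and obstruct the reduction, showing the hypothesis is sharp.

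Once $\Phi$ is in partial-matching form, the decomposition reads off directly: each surviving $1$ at position $([i_2,j_2],[i_1,j_1])$ contributes an $\textbf{R}^{[i_2,j_2]}_{[i_1,j_1]_\bullet}$ summand, each bar $[i,j]$ of $V_\bullet$ whose block-column of $\Phi$ is now entirely zero contributes an $\Int^{+}[i,j]_\bullet$ summand, and each bar $[i,j]$ of $W_\bullet$ whose block-row is zero contributes an $\Int^{-}[i,j]_\bullet$ summand. Counting these yields the multiplicities $r^{[i_2,j_2]}_{[i_1,j_1]}$ and $d^{\pm}_{ij}$ appearing in the statement.
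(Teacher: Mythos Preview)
Your overall framework coincides with the paper's: encode $\phi_\bullet$ as a single block matrix $\Phi$ indexed by $\preceq$-related pairs of bars, act on it by $\Stab(A^V_\bullet)\times\Stab(A^W_\bullet)$, and reduce. The paper's Steps~1 and~2 are exactly your first two paragraphs. The difficulty, and the gap in your proposal, is entirely in the reduction (the paper's Step~3).

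The available operations are \emph{directional}: a column operation can only add the block-column of a $\preceq$-smaller $V$-bar to that of a $\preceq$-larger one, and a row operation can only add the block-row of a $\preceq$-larger $W$-bar to that of a $\preceq$-smaller one. Consequently, from an arbitrarily chosen pivot you can clear entries to its right and above, but \emph{not} entries in the same row lying in $\trianglelefteq$-earlier block-columns, nor entries in the same column lying in $\trianglelefteq$-later block-rows. Your sentence ``eliminate all other non-zero entries in the pivot's block-row and block-column'' is therefore not achievable for a generic pivot, and the asserted ``lex-style monovariant that decreases strictly at every step'' is never defined, let alone shown to decrease under the fill-in these operations create in other block-rows and block-columns.

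The paper resolves this by \emph{reversing the roles of pivot and target}. It processes the block-columns in $\trianglelefteq$-increasing order, and within each block-column processes block-rows from the diagonal upward. When reaching a block $X^{[i_2,j_2]}_{[i_1,j_1]}$, every block to its left in the same block-row and every block below it in the same block-column is already in partial-identity form; the no-nested hypothesis (your correct observation that overlapping non-nested bars are $\preceq$-comparable) then guarantees that the column and row operations needed to use \emph{those earlier pivots} to clear the corresponding rows and columns of the current block are legal. Because each earlier pivot is the unique nonzero entry in its full column (resp.\ row), these operations create no fill-in in already-processed territory. The surviving nonzero part of the current block then occupies rows and columns that are zero elsewhere, so a diagonal $\GL\times\GL$ action puts it in Smith normal form without disturbing anything. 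This ordered sweep, not a floating pivot with a monovariant, is what makes the argument go through.
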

  
\begin{proof}
The argument proceeds along three basic steps.

\textbf{Step 1: Representing $\phi$}. 
Consider ordered barcode bases 
\[ \B_{V} := \set{B_{V,i} \subset V_i \mid 0 \leq i \leq \ell} \text{ and } \B_{W} := \set{B_{W,i} \subset W_i \mid 0 \leq i \leq \ell}\]
of $V_\bullet$ and $W_\bullet$. Let $d_{i,j}^{V}$ and $d_{i,j}^{W}$ be the multiplicity of $[i,j]$ bars in the barcodes of $V_\bullet$ and $W_\bullet$. We denote by $b_\bullet$ the matrix representations of the maps $\phi_\bullet$ in these chosen bases.
As in the proof Theorem \ref{theorem:stabbij}, let $[i_1,j_1]$ and $[i_2,j_2]$ be two intervals in the barcode decomposition of $(V_\bullet,f_\bullet)$, and denote their corresponding sequences by $\set{p_\bullet}$ and $\set{q_\bullet}$. Using the commuting relations $\phi_k \circ f_k=h_k \circ \phi_{k-1}$,  we have $\phi_k(p_k,q_k)=\phi_{k'}(p_{k'},q_{k'}) $, for all $k, k' \in [i,j]=[i_1,j_1] \cap [i_2,j_2]$, with this coefficient being zero unless $[i_1,j_1] \preceq [i_2,j_2]$. Assuming this order relation holds, define $X_{[i_1,j_1]}^{[i_2,j_2]}$, to be the submatrix of $b_{i}$ obtained by taking the $d_{i_2,j_2}^{V}$ columns corresponding to basis vectors part of an $[i_1,j_1]$ bar of $V$, and the $d_{i_1,j_1}^{W}$ rows corresponding to basis to basis vectors part of an $[i_2,j_2]$ bar of $W$. From the above observation, $X_{[i_1,j_1]}^{[i_2,j_2]}$ is a submatrix of $b_{i}, b_{i+1}, \dots b_{j}$, and is of dimension $d_{i_1,j_1}^{W} \times d_{i_2,j_2}^{V}$. Thus, the matrices $b_\bullet$ are completely determined by the matrices $X_{[i_1,j_1]}^{[i_2,j_2]}$ and may therefore be represented as a single block matrix
\[
	\left[
	\begin{array}{ccccccc}
	X_{[0,0]}^{ [0,0]} & X_{[0,0]}^{ [0,1]} & \dots & X_{[0,0]}^{[0, \ell]} &0  & \dots&0\\
	0 & X_{[0,1]}^{[0,1]} & \dots & \dots & X_{[0,1]}^{[1,\ell]} & \dots  & 0 \\
	\vdots& \vdots & \ddots& & & &  \\
	& & &  \ddots \\
	& & & & \ddots \\
	& & & &  & X_{[\ell, \ell-1]}^{ [\ell, \ell-1]} & X_{[\ell, \ell-1]}^{ [\ell, \ell]}\\
	0& 0 & &  &  & 0 & X_{[\ell, \ell]}^{ [\ell, \ell]} \\
	\end{array}
	\right] 
	\]
	This is a matrix of size $(\sum d_{i,j}^{W}) \times (\sum d_{i,j}^{V})$.
	
\textbf{Step 2: Admissible Operations}. Given two ordered barcode bases $\B_{V}$ and $\B_{w}$ we may define a block matrix as above, which we denote $b(\B_V,\B_W)$. By Proposition \ref{prop:obbase}, the set of ordered all barcode bases of $(V_\bullet, f_\bullet)$ coincides precisely with the orbit $\Stab(A(\B_1)_\bullet) \B_1$, where $\B_1$ is an ordered barcode basis. Then given $(h,k) \in \Stab(A(\B_V)_\bullet) \times \Stab(A(\B_W)_\bullet)$, we may consider $b(h\B_V,k \B_W)$. As seen in the proof of Theorem \ref{theorem:stabbij}, we are able to completely characterise an element $h \in \Stab(A(\B_V)_\bullet)$ in terms of the submatrices $h_{[i_1,j_1]}^{ [i_2,j_2]}$, so that $h$ may be represented as a single block matrix 

\[
	\left[
	\begin{array}{ccccccc}
	h_{[0,0]}^{ [0,0]} & h_{[0,0]}^{ [0,1]} & \dots & h_{[0,0]}^{[0, \ell]} &0  & \dots&0\\
	0 & h_{[0,1]}^{[0,1]} & \dots & \dots & h_{[0,1]}^{[1,\ell]} & \dots  & 0 \\
	\vdots& \vdots & \ddots& & & &  \\
	& & &  \ddots \\
	& & & &  & h_{[\ell, \ell-1]}^{ [\ell, \ell-1]} & h_{[\ell, \ell-1]}^{ [\ell, \ell]}\\
	0& 0 & &  &  & 0 & h_{[\ell, \ell]}^{ [\ell, \ell]} \\
	\end{array}
	\right] 
	\]
of size $(\sum d_{i,j}^{V}) \times (\sum d_{i,j}^{V})$. The same is true for $k \in \Stab(A(\B_W)_\bullet)$, whence $b(h \B_V,k \B_W)$ equals the (matrix) product $k \cdot b(\B_V,\B_W) \cdot h^{-1}$. Thus, we are only allowed to perform the following legal operations on $b(\B_V,\B_W)$:
\begin{enumerate}
    \item Using invertible block diagonal elements $h_{[i_1,j_1]}^{[i_1,j_1]}$ and $k_{[i_1,j_1]}^{[i_1,j_1]}$, we may perform any operations between column and rows corresponding to $[i_1,j_1]$ bars in our block matrix.
    
    \item Using the block matrices $h_{[i_1,j_1]}^{ [i_2,j_2]}$, we see we may modify columns corresponding to $[i_2,j_2]$ bars using columns corresponding to $[i_1,j_1]$ in our block matrix, whenever $[i_1,j_1] \preceq [i_2,j_2]$.
    
    \item Using the block matrices $k_{[i_1,j_1]}^{ [i_2,j_2]}$, we see we may modify rows corresponding to $[i_1,j_1]$ bars using columns corresponding to $[i_2,j_2]$ in our block matrix, whenever $[i_1,j_1] \preceq [i_2,j_2]$.
\end{enumerate}

\textbf{Step 3: Matrix Reduction}. We wish to find ordered barcode bases for which the corresponding matrix $b(\B_V,\B_W)$ admits at most one non-zero term $1$ in each row and column. From this form, the desired decomposition can be easily extracted: every $1$ in a row corresponding to an $[i_1,j_1]$ bar and column corresponding to an $[i_2,j_2]$ bar (with $[i_1,j_1] \preceq [i_2,j_2]$) corresponds to an $\textbf{R}_{[i_1,j_1]_\bullet}^{[i_2,j_2]}$ summand. Similarly, zero rows and columns then yield $\Int^{-}[i,j]_\bullet$ and $\Int^{+}[i,j]_\bullet$ summands respectively.

	Let $\B_V$, $\B_W$ be ordered barcode bases of $V_\bullet$, $W_\bullet$ so that 
	\[
	b(\B_V,\B_W)= \left[
	\begin{array}{ccccccc}
	X_{[0,0]}^{ [0,0]} & X_{[0,0]}^{ [0,1]} & \dots & X_{[0,0]}^{[0, \ell]} &0  & \dots&0\\
	0 & X_{[0,1]}^{[0,1]} & \dots & \dots & X_{[0,1]}^{[1,\ell]} & \dots  & 0 \\
	\vdots& \vdots & \ddots& & & &  \\
	& & &  \ddots \\
	& & & & \ddots \\
	& & & &  & X_{[\ell, \ell-1]}^{ [\ell, \ell-1]} & X_{[\ell, \ell-1]}^{ [\ell, \ell]}\\
	0& 0 & &  &  & 0 & X_{[\ell, \ell]}^{ [\ell, \ell]} \\
	\end{array}
	\right] 
	\]
	We seek stabiliser elements $(h,k)$ so that $B(h\B_V,k\B_W)=kB(\B_V,\B_W)h^{-1}$ is in barcode form. To this end, we perform basis changes using legal operations of type (1), (2) and (3). We process the column-blocks of this matrix from left to right, in each case starting from the diagonal block and working our way upwards. We will denote each treated matrix that has been put in adequate form by $P_{[i_1,j_1]}^{[i_2,j_2]}$. \\
	
	That is, we start with $X_{[0,0]}^{[0,0]}$, putting it in Smith normal form using basis changes $h_{[0,0]}^{[0,0]}$ and $k_{[0,0]}^{ [0,0]}$. Now assume we wish to treat $X_{[i_1,j_1]}^{[i_2,j_2]}$, where all matrices below it and to its left have been treated. That is, we have 
	\[\begin{blockarray}{cccccccc}
	\begin{block}{[cccccccc]}
	&  & & & && \\
	0& \cdots  & & P_{[i_1,j_1]}^{[i_1,j_1]} & \cdots & X_{[i_1,j_1]}^{[i_2,j_2]} & \\
	&  & && & \vdots & \\
	&  & & && P_{[i_2,\ell]}^{[i_2,j_2]} & \\
	& & & && 0 & \\
	& & & && \vdots & \\
	& & & && 0 & \\
	\end{block}
	\end{blockarray}
	\]
	Given non-intersecting bars $[a,b] \trianglelefteq [c,d]$, we have either $[a,b] \preceq [c,d]$ or $[a,b] \subset [c,d]$. Then by hypothesis, given $P_{[i_1,j_1]}^{[a,b]}$ to the left of $X_{[i_1,j_1]}^{[i_2,j_2]}$, we have $[a,b] \preceq [i_2,j_2]$.  So we may zero out rows of $X_{[i_1,j_1]}^{[i_2,j_2]}$ in which $P_{[i_1,j_1]}^{[a,b]}$ has 1's using operations of type (2). Similarly, given $P_{[a,b]}^{[i_2,j_2]}$ below $X_{[i_1,j_1]}^{[i_2,j_2]}$, we may zero out corresponding columns using operations of type (3). The non-zero columns of the resulting matrix $\tilde{X}_{[i_1,j_1]}^{ [i_2,j_2]}$ have 0's below them, and non-zero rows have 0's to their left.
	
	Then using basis changes $h_{[i_2,j_2]}^{[i_2,j_2]}$ and $k_{[i_1,j_1]}^{[i_1,j_1]}$, we may put $A$ in Smith normal form, without adding non-zero terms in any rows below and column to its left, preserving the desired structure.
	\end{proof}
	
\begin{ex} \label{example: bars counter example} We illustrate the difficulties imposed by strictly nested bars in the context of Theorem \ref{theorem:nonest}. 
Consider the map $\phi_\bullet : V_\bullet \mapsto W_\bullet$ where $V_\bullet$ has barcode 

	\begin{tikzpicture}[line cap=round,line join=round,>=triangle 45,x=1cm,y=.8cm]
		\clip(-8.853170731707314,3) rectangle (8.551707317073165,5.56487804878048);
		\draw [line width=2pt] (-4,5)-- (-2,5);
		\draw [line width=2pt] (-2,5)-- (0,5);
		\draw [line width=2pt] (0,5)-- (2,5);
		\draw [line width=2pt] (-2,4)-- (0,4);
		
		\begin{scriptsize}
		\draw [fill=black] (-4,5) circle (2.5pt);
		\draw [color=black] (-4,5.35) node {1};
		\draw [fill=black] (-2,5) circle (2.5pt);
		\draw [fill=black] (0,5) circle (2.5pt);
		\draw [fill=black] (2,5) circle (2.5pt);
		\draw [color=black] (2,5.35) node {4};
		\draw [fill=black] (-2,4) circle (2.5pt);
		\draw [color=black] (-2,4.35) node {2};
		\draw [fill=black] (0,4) circle (2.5pt);
		\draw [color=black] (0,4.35) node {3};
		\end{scriptsize}
		\end{tikzpicture}
		
\noindent and $W_\bullet$ has barcode

\begin{tikzpicture}[line cap=round,line join=round,>=triangle 45,x=1cm,y=.8cm]
		\clip(-8.853170731707314,3) rectangle (8.551707317073165,5.56487804878048);
		\draw [line width=2pt] (-4,5)-- (-2,5);
		\draw [line width=2pt] (-2,5)-- (0,5);
		\draw [line width=2pt] (-6,5)-- (-4,5);

		\begin{scriptsize}
		\draw [fill=black] (-6,5) circle (2.5pt);
		\draw [color=black] (-6,5.35) node {0};
		\draw [fill=black] (-4,5) circle (2.5pt);
		\draw [fill=black] (-2,5) circle (2.5pt);
		\draw [fill=black] (0,5) circle (2.5pt);
		\draw [color=black] (0,5.35) node {3};
		\end{scriptsize}
		\end{tikzpicture}
		with associated ordered barcode bases $\B_V$, $\B_W$, and $\phi_\bullet$ is given by the block matrix representation 
	 \[b(\B_V,\B_W) = ~
	\begin{blockarray}{ccc}
	[1,4]&[2,3] &\\
	\begin{block}{[cc]c}
	1 & 1& [0,3] \\
	\end{block}
	\end{blockarray}
	\]
	Since $[2,3] \subset [1,4]$, stabiliser changes of basis for $\B_V$ are invertible diagonal matrices $\left[\begin{smallmatrix} a & 0 \\ 0 & b \end{smallmatrix}\right]$, and stabiliser changes of basis for $\B_W$ are invertible matrices $\left[\begin{smallmatrix} c \end{smallmatrix} \right]$. As such, $(V_\bullet, W_\bullet, \phi_\bullet)$
	which will never be expressible as a direct sum of modules as in Theorem \ref{theorem:nonest}, since there is no change of basis which will allow us to transform this matrix into either $\left[\begin{smallmatrix} 0 & 1 \end{smallmatrix} \right]$ or $\left[\begin{smallmatrix} 1 & 0 \end{smallmatrix} \right]$. The same is true if $V_\bullet$ has barcode 
	
	\begin{tikzpicture}[line cap=round,line join=round,>=triangle 45,x=1cm,y=.8cm]
		\clip(-8.853170731707314,3) rectangle (8.551707317073165,5.56487804878048);
		\draw [line width=2pt] (-4,5)-- (-2,5);
		\draw [line width=2pt] (-2,5)-- (0,5);
		\draw [line width=2pt] (0,5)-- (2,5);

		\begin{scriptsize}
		\draw [fill=black] (-4,5) circle (2.5pt);
		\draw [color=black] (-4,5.35) node {1};
		\draw [fill=black] (-2,5) circle (2.5pt);
		\draw [fill=black] (0,5) circle (2.5pt);
		\draw [fill=black] (2,5) circle (2.5pt);
		\draw [color=black] (2,5.35) node {4};
		\end{scriptsize}
		\end{tikzpicture}
		
\noindent and $W_\bullet$ has barcode

\begin{tikzpicture}[line cap=round,line join=round,>=triangle 45,x=1cm,y=.8cm]
		\clip(-8.853170731707314,3) rectangle (8.551707317073165,5.56487804878048);
		\draw [line width=2pt] (-4,5)-- (-2,5);
		\draw [line width=2pt] (-2,5)-- (0,5);
		\draw [line width=2pt] (-6,5)-- (-4,5);
		\draw [line width=2pt] (-4,4)-- (-2,4);
		
		\begin{scriptsize}
		\draw [fill=black] (-6,5) circle (2.5pt);
		\draw [color=black] (-6,5.35) node {0};
		\draw [fill=black] (-4,5) circle (2.5pt);
		\draw [fill=black] (-2,5) circle (2.5pt);
		\draw [fill=black] (0,5) circle (2.5pt);
		\draw [color=black] (0,5.35) node {3};
		\draw [fill=black] (-4,4) circle (2.5pt);
		\draw [color=black] (-4,4.35) node {1};
		\draw [fill=black] (-2,4) circle (2.5pt);
		\draw [color=black] (-2,4.35) node {2};
		\end{scriptsize}
		\end{tikzpicture}
			with associated ordered barcode bases $\B_V$, $\B_W$, and $\phi_\bullet$ is given by the block matrix representation
	\[
	\begin{blockarray}{cc}
	[1,4] &\\
	\begin{block}{[c]c}
	1 &  [0,3] \\
	1 & [1,2]\\
	\end{block}
	\end{blockarray}
	\]

	Thus, if the nested condition is violated on either $V_\bullet$ or $W_\bullet$, a decomposition as in Theorem \ref{theorem:nonest} is not always possible.
\end{ex} 

\begin{rem} \label{example: potential applications}
The non-nestedness hypothesis on the bars  of $V_\bullet$ and $W_\bullet$ from Theorem \ref{theorem:nonest} is quite restrictive. There are, however, several scenarios of interest where it is satisfied: 
\begin{enumerate}
    \item The 0-th persistent homology of a point cloud satisfies the hypothesis because all bars have left endpoint $0$.
    \item Similarly, the 1st persistent homology of a filtered graph admits no strictly nested bars because all bars have right endpoint $\infty$.
    \item More generally, the $n$-th persistent homology of an $n$-dimensional filtered complex satisfies the non-nestedness criterion. One may consider, for instance, the {\em Linial-Meshulam} model of random simplicial complexes \cite{linmesh}. A random simplicial complex chosen from this model on a given vertex set consists of every possible simplex of dimension $< n$, with candidate $n$-simplices being included independently with uniform probability $p \in [0,1]$. The $n$-th persistent homology of any filtration of such a random complex satisfies the hypothesis.
\end{enumerate}  Theorem \ref{theorem:nonest} applies in all such cases.
\end{rem}

	\section{Zizag Modules} \label{sec:zigzag}
	
	In this section, we wish to generalise the previous work to representations of any type $A$ quiver, in other words zizag persistence modules. The nomenclature we adpot here is extracted from \cite{zizag}. A zizag module is given by a sequence 
	\[
	\xymatrixcolsep{.5in}
	\xymatrix{
		V_{0} \ar@{<->}[r]^-{p_1} & V_{1} \ar@{<->}[r]^-{p_2} & \cdots  \ar@{<->}[r]^-{p_{\ell-1}} & V_{\ell-1} \ar@{<->}[r]^-{p_\ell} & V_{\ell}
	}
	\]
	where each $\xlongleftrightarrow{p_i}$ is either a forward map $\xlongrightarrow{f_i}$ or a backwards map $\xlongleftarrow{q_i}$. The direction of the arrows define the \textbf{type} $\tau$ of the zizag module, which is the direction of the arrows of the underlying type $A$ quiver. For example,
	\[	\xymatrixcolsep{.5in}
	\xymatrix{
		V_{0} \ar@{->}[r]^-{f_1} & V_{1} \ar@{<-}[r]^-{q_2} & V_2
	}
	\] has type $\tau=fq$. We denote zizag modules as $(V_\bullet, p_\bullet, \tau)$. 
	
   The {\bf interval module} of type $\tau$ corresponding to a pair of non-negative integers $i \leq j$ is the zizag module $\Int_\tau[i,j]_\bullet$ given by
    \[
	\xymatrixcolsep{.17in}
	\xymatrix{
		0 \ar@{<->}[r] & \cdots \ar@{<->}[r]  & 0 \ar@{<->}[r] & \F \ar@{<->}[r] & \cdots \ar@{<->}[r]  & \F \ar@{<->}[r] & 0 \ar@{<->}[r] & \cdots \ar@{<->}[r] & 0,
	}
	\]
	where the contiguous string of $\F$'s spans $\set{i,i+1,\ldots,j-1,j}$, all intermediate $\F\longleftrightarrow\F$ maps are identities in the direction depending on $\tau$, and all other vector spaces are trivial. Zizag modules also decompose into interval modules: this follows from the main result of \cite{gabriel}, and is established more directly in \cite{zizag}. In particular, every zigzag module $(V_\bullet, p_\bullet, \tau)$ is isomorphic to a direct sum 
	\[(V_\bullet, p_\bullet, \tau) \cong \bigoplus_{0 \leq i \leq j \leq \ell} ( \Int _\tau [i,j])^{d_{ij}},
	\] for some uniquely determined $d_{ij} \in \mathbb{N}$. 
	
		\begin{definition} \label{def:robarbase}
	An $m \times n$ matrix $A$ of rank $r$ is in \textbf{reversed barcode form} if there exists a strictly increasing function $c : \set{m-r+1, \ldots, m} \to \set{1, \ldots n}$ so that
	\[
	A_{ij} = \begin{cases}
	                1 & \text{if } j = c(i), \\
	                0 & \text{otherwise.}
	            \end{cases}
	\]
	\end{definition}
	Thus, a matrix is in reversed barcode form whenever its entries lie in $\set{0,1}$, with at most one non-zero term in each row and column, {\em and} satisfies the following additional requirement: the $r$ non-zero terms appear in the last $r$ columns, with increasing row order. (We warn the reader that if a matrix $A$ is in barcode form, then its transpose will not in general be in reversed barcode form; however, the off-diagonal transpose of $A$ will be in reversed barcode form).
	
	\begin{definition}
	A basis family $\B$ for a zizag module is called an {\bf barcode basis} for $(V_\bullet,p_\bullet, \tau)$ if all of the $A_i$ for which $V_{i-1} \xlongrightarrow{f_i} V_{i}$ are in barcode form, and all the $A_i$ for which $V_{i-1} \xlongleftarrow{q_i} V_{i}$ are in reversed barcode form. The corresponding matrices $A_\bullet$ are then said to be in \textbf{zizag barcode form}.
	\end{definition}
	 We note that this notion coincides with Definition \ref{def:barbase} whenever all maps in sight are forward. Barcode bases for zizag modules are the natural bases arising from their decomposition into interval modules.
	
	\subsection{Algorithm for Zizag Modules} 
	
	We wish to generalise our algorithm from Section \ref{sec:barbasis} to treat the case of zizag modules. Instead of presenting concrete algorithms, we will adapt our proof of Proposition \ref{prop:gabcons} in Section \ref{sec:barbasis} to the more general setting. Implementing the algorithm is then achieved in  similar fashion to what was done in Section \ref{sec:barbasis} for classical persistence modules. To this end, we fix a zizag module $(V_\bullet,p_\bullet, \tau)$ expressed as a sequence of matrices $A_\bullet$ with respect to an arbitrary (i.e., not necessarily barcode) basis family $\B$:
	 \begin{align}\label{eq:zig}
	 \xymatrixcolsep{.5in}
	 \xymatrix{
	 	\F^{n_0} \ar@{<->}[r]^-{A_1} & \F^{n_1} \ar@{<->}[r]^-{A_2} & \cdots  \ar@{<->}[r]^-{A_{\ell-1}} & \F^{n_{\ell-1}} \ar@{<->}[r]^-{A_\ell} & \F^{n_\ell}
	 	}
	 \end{align}
	
	The matrices $A_{i}$ are of dimension either $n_{i} \times n_{i-1}$ or $n_{i-1} \times n_{i}$, depending on the type $\tau$ of $V_\bullet$. Analogously to Section \ref{sec:setting}, we may define $X_\tau$ to be the set of all the possible matrix-sequences $A_\bullet$ which can arise in \eqref{eq:zig}. It is a (strict) subset 
	\begin{align*}
	    X_\tau \subset \prod_{i=1}^{\ell}Y_i, \quad \text{where} \quad
	Y_i=
	\begin{cases} \text{Mat}\left(n_i \times n_{i-1};\F\right) \quad \text{if} \quad i-1 \longrightarrow i \\
	\text{Mat}\left(n_{i-1} \times n_i;\F\right) \quad \text{if} \quad i-1 \longleftarrow i 
	\end{cases}
	\end{align*}
	Changes of bases for zigzag modules are obtained by a new action of the group $G$ from \eqref{eq:G} on the set $X_\tau$; the major difference between this action and the one treated in Section \ref{sec:barbasis} is that the (direction of) conjugation now depends on the type $\tau$. Explicitly, if $V_{i-1} \xlongrightarrow{f_i} V_{i}$ points forward, then it gets sent to $g_i \circ f_i \circ g_{i-1}^{-1}$ as before; and conversely, if $V_{i-1} \xlongleftarrow{q_i} V_{i}$ points backwards, then it is sent to $g_{i-1} \circ q_i \circ g_i^{-1}$. For this action, we obtain the following analogue of Lemma \ref{lem:matop}.

		\begin{lemma} \label{lemma:zmatop}
		Assume that the first $\ell-1$ matrices of $\set{A_i \mid 1 \leq i < \ell}$ from \eqref{eq:zig} are in zizag barcode form.
		\begin{itemize}
		\item If $f_\ell: V_{\ell-1} \rightarrow V_\ell$, and the last matrix $A_\ell$ has a pivot in the $(r,q)$ position\footnote{i.e., we have $A_{\ell}(r,q)=1$ while all other entries in the $q$-th column of $A_\ell$ are zero.},  together with a nonzero entry $\alpha := A_\ell(r,p)$ in the same row $r$ but some other column $p > q$, then there exists $g \in  G$ with $g_{\ell}=\text{\rm Id}$ so that $(gA)_\bullet$ equals $A_\bullet$ except $A_\ell$ where the $\alpha$ entry is replaced by zero. 
		
		\medskip
		
		\item If $q_\ell: V_{\ell-1} \leftarrow V_\ell$, and the last matrix $A_\ell$ has a pivot in the $(r,q)$ position,  together with a nonzero entry $\alpha := A_\ell(s,q)$ in the same column $q$ but some other row $ s<r$, then there exists $g \in  G$ with $g_{\ell}=\text{\rm Id}$ so that $(gA)_\bullet$ equals $A_\bullet$ except $A_\ell$ where the $\alpha$ entry is replaced by zero.
		\end{itemize}
			\end{lemma}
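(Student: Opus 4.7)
The plan is to induct on $\ell$, mirroring the template of Lemma \ref{lem:matop} and handling both bullets inside a common inductive step. The base case $\ell=1$ is immediate, since only one matrix is in play and the prescribed basis change on $V_0$ has nowhere to propagate.

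For the inductive step, the first bullet is essentially a verbatim adaptation of Lemma \ref{lem:matop}: perform $\bC_{p\gets q}(-\alpha)$ on $A_\ell$, which kills the offending entry in the same row as the pivot, and realise this via an appropriate basis change on $V_{\ell-1}$. The only new feature is that the resulting change on $A_{\ell-1}$ may now be a column operation (if $p_{\ell-1}$ points backward) rather than a row operation. For the second bullet the dual strategy is required: since the pivot at $(r,q)$ for a backward arrow means that row $r$ of $A_\ell$ has a single $1$ at column $q$ and zeros elsewhere, the row operation $\bR_{s\gets r}(-\alpha)$ zeros out $A_\ell(s,q)$ while leaving every other entry of $A_\ell$ unchanged; the corresponding basis change on $V_{\ell-1}$ is $g_{\ell-1}=e_{s,r}(-\alpha)$.

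The induced operation on $A_{\ell-1}$ depends on the direction of $p_{\ell-1}$: a row operation $\bR_{s\gets r}(-\alpha)$ if $p_{\ell-1}$ is forward (so $A_{\ell-1}$ is in barcode form), and a column operation $\bC_{r\gets s}(\alpha)$ if $p_{\ell-1}$ is backward (so $A_{\ell-1}$ is in reversed barcode form). In each subcase I would split into the analog of Cases 1 and 2 from Lemma \ref{lem:matop}. In Case 1 the affected row or column of $A_{\ell-1}$ is identically zero, the operation has no effect, and we are done. In Case 2 a single new stray entry appears in $A_{\ell-1}$, sitting next to an existing pivot of $A_{\ell-1}$; the appropriate bullet of the present lemma, applied to the truncated sequence $(A_1,\dots,A_{\ell-1})$ by the inductive hypothesis, cleans up this stray using basis changes with $g_{\ell-1}=\text{Id}$, so that $A_\ell$ is left untouched.

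The main obstacle will be verifying the Case 2 configuration across each of the four direction combinations for $(A_\ell,A_{\ell-1})$. For forward/forward this is precisely Case 2 of Lemma \ref{lem:matop}. For the mixed and backward/backward combinations the pivot conventions must be aligned with care: barcode form places pivots in the first rows with strictly increasing columns, whereas reversed barcode form places them in the last rows with strictly increasing columns. Thus the relative position of the new stray entry with respect to the adjacent pivot of $A_{\ell-1}$ must be checked to match the hypothesis of bullet 1 or bullet 2 before the inductive hypothesis can be applied; in particular, in the backward/backward case one must track the image of the strictly increasing indexing function $c$ of $A_{\ell-1}$ to confirm that the offending entry produced by $\bC_{r\gets s}(\alpha)$ lies above the pivot of its column, which is the setup demanded by bullet 2. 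Once this routine but meticulous case check has been carried out, the inductive step closes and the lemma follows.
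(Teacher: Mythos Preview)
Your proposal is correct and follows essentially the same approach as the paper's own proof: induct on $\ell$, perform the natural column (first bullet) or row (second bullet) operation on $A_\ell$ via a basis change on $V_{\ell-1}$, and then split into four cases according to the orientations of the last two arrows, in each case either terminating (when the affected row/column of $A_{\ell-1}$ is zero) or invoking the inductive hypothesis on the single stray entry produced in $A_{\ell-1}$. The paper carries out the four cases explicitly with the resulting matrix shapes, but the strategy and the case structure are identical to what you describe.
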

 
		\begin{proof}
		The proof is done by induction, in similar fashion to that of Lemma \ref{lem:matop}. The case $\ell=1$ remains trivial. To prove the induction, 4 cases should now be considered. 
		
		\textbf{Case 1} :$V_{\ell-2} \rightarrow V_{\ell-1} \rightarrow V_{\ell}$
		
		This case is precisely that of Lemma \ref{lem:matop}, and the proof remains, the same. 
		
		\textbf{Case 2}:  $V_{\ell-2} \leftarrow V_{\ell-1} \rightarrow V_{\ell}$ 
		
		Performing  $\bC_{p \gets q}(-\alpha)$ on $A_\ell$ induces the same operation on $A_{\ell-1}$. If the $q$-th column of $A_{\ell-1}$ is identically zero, this operation leaves $A_{\ell-1}$ unchanged. Otherwise, since $p>q$ and $A_{\ell-1}$ is in reversed barcode form, if the $q$-th column isn't zero then the $p$-th column is also non-zero. Then the resulting matrix $A_{\ell-1}'$ has the form 
				\[A'_{\ell-1} = ~
	\begin{blockarray}{cccccccc}
	& q & & & & p\\
	\begin{block}{[ccccccc]c}
	0 & 1 & 0 & \cdots &0 & -\alpha & 0 & c \\
	& 0& & & & 0 \\
	& \vdots & & & & \vdots \\
	& 0& &  & & 0\\
	0& 0&0 & \cdots& &1 & 0 & d  \\
	\end{block}
	\end{blockarray}
	\] where we may again use induction. 
	
	\textbf{Case 3}: $V_{\ell-2} \rightarrow V_{\ell-1} \leftarrow V_{\ell}$
	
	We must perform the row operation $\bR_{s \gets r}(-\alpha)$ on $A_\ell$, inducing the same row operation on $A_{\ell-1}$. Again, if the $r$-th row of $A_{\ell-1}$ is identically zero, we are done. Otherwise, since $A_{\ell}$ is in barcode form and $s<r$, if the $r$-th row isn't zero then the $s$-th row isn't zero, and so the resulting matrix $A_{\ell-1}'$ has the form
	$A_{\ell-1}'$ has the form 
			\[A'_{\ell-1} = ~
	\begin{blockarray}{cccccccc}
	& c & & & & d\\
	\begin{block}{[ccccccc]c}
	0 & 1 & 0 & \cdots &0 & -\alpha & 0 & s \\
	& 0& & & & 0 \\
	& \vdots & & & & \vdots \\
	& 0& &  & & 0\\
	0& 0&0 & \cdots& &1 & 0 & r  \\
	\end{block}
	\end{blockarray}
	\] where we may again use induction.
	
	\textbf{Case 4}:  $V_{\ell-2} \leftarrow V_{\ell-1} \leftarrow V_{\ell}$ 
	
	We perform  the row operation $\bR_{s \gets r}(-\alpha)$ on $A_\ell$, inducing the column operation $\bC_{r \gets s}(\alpha)$ on $A_{\ell-1}$. Again, if the $s$-th column of $A_{\ell-1}$ is zero, we are done. Otherwise, since $A_{\ell-1}$ is in barcode form and $s<r$, the $r$-th column is also non-zero so that the resulting matrix $A_{\ell-1}'$ has the form 
		\[A'_{\ell-1} = ~
	\begin{blockarray}{cccccccc}
	& s & & & & r\\
	\begin{block}{[ccccccc]c}
	0 & 1 & 0 & \cdots &0 & \alpha & 0 & c \\
	& 0& & & & 0 \\
	& \vdots & & & & \vdots \\
	& 0& &  & & 0\\
	0& 0&0 & \cdots& &1 & 0 & d  \\
	\end{block}
	\end{blockarray}
	\] where we may again use induction.
	\end{proof}
		
	The zigzag-compatible avatar of Proposition \ref{prop:gabcons} is given as follows. As before, we regard this result as the 'matrix version' of the decomposition theorem for zizag persistence modules.
	
	\begin{prop}
	   Given the sequence of matrices $A_\bullet$ as in \eqref{eq:zig}, 
there is a $g \in G$ such that $(gA)_\bullet$ has all its matrices in zizag barcode form.
	\end{prop}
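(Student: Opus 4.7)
The plan is to mirror the inductive proof of Proposition \ref{prop:gabcons}, replacing appeals to Lemma \ref{lem:matop} with appeals to Lemma \ref{lemma:zmatop}, which has been stated so as to bundle all four combinations of arrow directions at the final pair of vertices. The induction is on $\ell$. For the base case $\ell = 1$, a single matrix $A_1$ admits standard Gaussian elimination: row operations are implemented by $g_1$ and column operations by $g_0$, so one may freely reduce $A_1$ either to barcode form (if $V_0 \xlongrightarrow{f_1} V_1$) or to reversed barcode form (if $V_0 \xlongleftarrow{q_1} V_1$, by placing pivots starting from the bottom rather than the top).

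For the inductive step, apply the statement to the truncated zigzag of length $\ell$ to obtain $g' = (g_0, \ldots, g_{\ell-1})$ bringing the first $\ell-1$ matrices to zigzag barcode form. Setting $g = (g', \text{\rm Id}_{n_\ell}) \in G$ and replacing $A_\bullet$ by $(gA)_\bullet$, we may assume $A_1, \ldots, A_{\ell-1}$ are already in zigzag barcode form, and the task reduces to bringing $A_\ell$ to the correct form without disturbing the earlier matrices. Split into two cases according to the direction of the final arrow.

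If $V_{\ell-1} \xlongrightarrow{f_\ell} V_\ell$, the basis change $g_\ell$ acts on $A_\ell$ by left multiplication only and is not involved in any earlier matrix, so row operations on $A_\ell$ are entirely free. Apply them to put $A_\ell$ in reduced row echelon form, then invoke the first part of Lemma \ref{lemma:zmatop} repeatedly to zero out each remaining non-pivot entry in a pivot row. If $V_{\ell-1} \xlongleftarrow{q_\ell} V_\ell$, the roles swap: $g_\ell$ now acts on $A_\ell$ by right multiplication, making column operations the free ones, so use them to bring $A_\ell$ to the dual reduced form with pivots occupying the last $r$ rows in strictly increasing column order, then invoke the second part of Lemma \ref{lemma:zmatop} to eliminate the remaining non-pivot entries in each pivot column. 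Composing all accumulated basis changes yields the desired $g \in G$.

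The only non-routine point is that each elimination step on $A_\ell$ induces a compensating operation on $A_{\ell-1}$, which could a priori destroy the zigzag barcode form already attained on the earlier matrices. This is exactly the difficulty that Lemma \ref{lemma:zmatop} is designed to resolve: its own inductive proof handles each of the four local direction patterns ($V_{\ell-2} \leftrightarrow V_{\ell-1} \leftrightarrow V_\ell$) and propagates the induced adjustments backwards through the zigzag while preserving the barcode form at every stage. Thus the main obstacle is not in the present proposition but is packaged in that lemma, and here one only needs to dispatch the two cases cleanly and verify that the resulting $g_\ell$ (together with the adjusted earlier factors) lies in $G$.
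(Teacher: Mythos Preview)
Your proposal is correct and follows essentially the same approach as the paper: induct on $\ell$, reduce to the case where the first $\ell-1$ matrices are in zigzag barcode form, use the free operations on $A_\ell$ (rows if the last arrow is forward, columns if backward) to reach an echelon-type form, and then invoke Lemma \ref{lemma:zmatop} to clear the remaining off-pivot entries without disturbing the earlier matrices. Your explicit remark that the propagation of induced operations to $A_{\ell-1}$ is absorbed entirely by Lemma \ref{lemma:zmatop} is exactly the point, and matches the paper's reasoning.
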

	\begin{proof}
	We proceed by induction. The case $\ell=1$ is trivial. We can reduce ourselves by induction hypothesis to the case where the first $\ell-1$ matrices are in zizag barcode form. If $ V_{\ell-1} \to V_{\ell}$ we perform row  operations on $A_{\ell}$ to put it in reduced row echelon form. Otherwise, if $V_{\ell-1} \leftarrow V_{\ell}$, we perform column operations to put $A_{\ell}$ in reversed reduced column echelon form. That is, we put $A_{\ell}$ in the form 
	\[
\begin{blockarray}{cccccccccc}
\begin{block}{[cccccccccc]}
 & & &\vline \; \star& \star& \dots & \dots & \star & \\
 & 0& &\vline \; 1& 0& \dots & \dots & 0 & \\
 \cline{1-9}
 & & & & \ddots& & & & \\
 \cline{1-9}
  &  &  &  &  &\vline \; \star & \star &  \star& \\
  &   &  0&  &  & \vline \; 1& 0 & 0 & \\
 \cline{1-9}
 & & &  &  & & \vline \;\star & \star & \\
 &  & &0 & &  & \vline \; 1 & 0 & \\
 \cline{1-9}
 & & &  & & &  & \vline \; \star & \\
 & & & & 0& & & \vline \; 1& \\
 \cline{1-9}
 & & & & & & 0 & & \\
\end{block}
\end{blockarray}
\]

\noindent This is achieved through a slight tweak to the standard Gaussian algorithm for placing matrices in column echelon form, where one starts with the last row and works upwards. Finally, we may apply Lemma \ref{lemma:zmatop} to the non-zero $\star$ term of $A_{\ell}$ obtain the desired result.
\end{proof}

	\subsection{Barcode bases of zizag modules}
	In order to characterise the set of barcode bases of a zizag module, we must again attempt to characterise the set $\Stab(A_\bullet)$ corresponding to linear maps of a zizag module in a barcode basis. In Section \ref{sec:stabiliser}, we defined an order $\preceq$ on the set of intervals that allowed us to classify the stabiliser as 
	\[
	\Stab(A_\bullet) ~ \cong ~ \prod_{[i,j]}\GL(d_{ij};\mathbb{F})\times \hspace{-.2in} \prod_{[i_1,j_1] \precneq [i_2,j_2] } \hspace{-.2in} \text{\rm Mat}(d_{i_{1}j_{1} }\times d_{i_{2}j_{2}};\mathbb{F}).
	\]
	
	The $\text{\rm Mat}(d_{i_{1}j_{1} }\times d_{i_{2}j_{2}};\mathbb{F})$ came from the fact that in standard persistence, the interval module $\Int[i_1,j_1]$ can be mapped non-trivially to the interval $\Int[i_2,j_2]$ if and only if $[i_1,j_1] \preceq [i_2,j_2]$. In zizag persistence, this is no longer true. 
	
	\begin{ex}
	One checks that in the following scenario
	
	\begin{tikzpicture}[line cap=round,line join=round,>=triangle 45,x=1cm,y=1cm]
		\clip(-8.853170731707314,3) rectangle (8.551707317073165,5.56487804878048);
		\draw [line width=2pt] (-4,5)-- (-2,5);
		\draw [line width=2pt] (-2,5)-- (0,5);
		\draw [line width=2pt] (0,5)-- (2,5);
		\draw [line width=2pt] (-2,4)-- (0,4);
		\draw [->,line width=2pt] (-2,5) -- (-3.3,5);
		\draw [->,line width=2pt] (-2,5) -- (-0.5,5);
		\draw [->,line width=2pt] (-2,4) -- (-0.5,4);
		\draw [->,line width=2pt] (0,5) -- (1.3,5);

		\begin{scriptsize}
		\draw [fill=black] (-4,5) circle (2.5pt);
		\draw [color=black] (-4,5.35) node {1};
		\draw [fill=black] (-2,5) circle (2.5pt);
		\draw [fill=black] (0,5) circle (2.5pt);
		\draw [fill=black] (2,5) circle (2.5pt);
		\draw [color=black] (2,5.35) node {4};
		\draw [fill=black] (-2,4) circle (2.5pt);
		\draw [color=black] (-2,4.35) node {2};
		\draw [fill=black] (0,4) circle (2.5pt);
		\draw [color=black] (0,4.35) node {3};
		\end{scriptsize}
		\end{tikzpicture}
		
\noindent the interval $[1,4]$ may be non-trivially mapped to the interval $[2,3]$.
	\end{ex}
	
To then obtain a similar classification result for zizag modules, we see we must adapt our partial order $\preceq$ to the type $\tau$ of our zizag module. 

\begin{definition} Let $\tau$ be a type of zizag module, defining an orientation on the standard length $\ell$ quiver. Let $\preceq_{\tau}$ be the binary relation on $\set{[i,j] \in \Z^2 \mid  0 \leq i \leq j \leq \ell}$ given by
	\[[i_1,j_1] \preceq_\tau [i_2,j_2] \Leftrightarrow \begin{cases}
	[i_1,j_1] \cap [i_2,j_2]=[i,j] \neq \emptyset \quad \text{and}  \\
i_1 \leq i_2 \quad \text{if} \quad i-1 \rightarrow i, \quad i_2 \leq i_1 \quad \text{if} \quad i-1 \leftarrow i \quad \text{and}  \\
j_1 \leq j_2 \quad \text{if} \quad j \rightarrow j+1, \quad j_2 \leq j_1 \quad \text{if} \quad j \leftarrow j+1
	\end{cases}
	\]
\end{definition}
We observe that when all maps point forward, i.e., when $\tau=ff \ldots f$, we have $\preceq_\tau=\preceq$. And if all maps point backwards, ie $\tau= qq \ldots q$, then $\preceq_\tau$ is the reverse of $\preceq$ in the sense that 
\[
[i_1,j_1] \preceq_\tau [i_2,j_2] \Leftrightarrow [i_2,j_2] \preceq [i_1,j_1].
\]
Our description of the stabiliser from Theorem \ref{theorem:stabbij} relied on the compatibility of $\preceq$ with the lexicographical total order $\trianglelefteq$ on the bars. Having produced a $\tau$-analogue of $\preceq$, we must now construct the zigzag version of $\trianglelefteq$. To this end, we define two new auxiliary total orders on the set of all possible endpoints $\set {0, \ldots , \ell}$. Note that any such order $<$ amounts to a choice of element $\sigma_\ell$ lying in the permutation group $S_{\ell+1}$ via the identification
\[\sigma(0) < \sigma(1) < \cdots  < \sigma(\ell).
\]

\begin{definition} Let $\leq_\tau$ be the total order on $\set {0,1 \ldots \ell }$ given by the permutation $\sigma_\ell$ defined inductively as follows. Assuming we have ordered $\set {0, \ldots , i}$ with corresponding permutation $\sigma_i$, we order $\set {0, \ldots , i+1}$ with permutation
$\sigma_{i+1}$, setting 

\[\sigma_{i+1}(k)= \begin{cases} \sigma_{i}(k) & \text{if} \quad i \longrightarrow i+1 \quad \text{for} \quad k \in \set{0,\ldots i} \\
i+1 & \text{if} \quad  i \longrightarrow i+1 \quad \text{for} \quad k=i+1\\
\sigma_{i}(k)+1 & \text{if} \quad i \longleftarrow i+1 \quad \text{for} \quad k \in \set{0. \ldots i} \\
0 & \text{if} \quad i \longleftarrow i+1 \quad \text{for} \quad k=i+1
\end{cases}
\]
\end{definition}

\begin{definition}  Let ${\leq^*_\tau}$ be the total order on $\set {0,1 \ldots, \ell }$ given by the permutation $\sigma^*_\ell$ defined inductively as follows. Assuming we have ordered $\set {j+1, \ldots , \ell}$ with corresponding permutation $\sigma^*_{j+1}$, we order $\set{j, \ldots , \ell}$ with permutation
${\sigma}^*_{j}$, setting 
\[{\sigma}^*_{j}(k)= \begin{cases} {\sigma}^*_{j+1}(k) & \text{if} \quad j \longrightarrow j+1 \quad \text{for} \quad k \in \set{j+1. \ldots \ell} \\
j & \text{if} \quad  j \longrightarrow j+1 \quad \text{for} \quad k=j\\
 {\sigma}^*_{j+1}(k)-1 & \text{if} \quad j \longleftarrow j+1 \quad \text{for} \quad k \in \set{j+1. \ldots \ell} \\
\ell & \text{if} \quad j \longleftarrow j+1 \quad \text{for} \quad k=j
\end{cases}
\]
\end{definition}
If $\tau=ff\ldots f$, both of the above total orders coincide with the standard ordering $\leq$ on $\set{0,1 \ldots \ell}$.

\begin{ex}
Consider zigzag modules of length $4$ with type $0 \longleftarrow 1 \longrightarrow 2 \longleftarrow 3$. For such modules, the two total orders defined above are 
\begin{align*}
&3 \leq_\tau 1 \leq_\tau 0 \leq_\tau 2, \text{ and} \\
&1 \leq^*_\tau 3 \leq^*_\tau 2 \leq^*_\tau 0.
\end{align*}
\end{ex}

\begin{definition}
    Let $\tau$ be a type of length-$\ell$ zigzag module. The corresponding total order $\trianglelefteq_{\tau}$ on $\set{[i,j] \in \Z^2 \mid  0 \leq i \leq j \leq \ell}$ is given by
	 \begin{center}
	        $ [a,b] \trianglelefteq_\tau [c,d] \Leftrightarrow a <_\tau c $ or $a=c$ and $b  \; {\leq}^*_\tau \; d$.
	  \end{center}
\end{definition}

By construction, if $\tau=f f \ldots f$, this coincides with the lexicographic order $\trianglelefteq$ from Section \ref{sec:stabiliser}, and $[i_1,j_1] \preceq_\tau [i_2,j_2] \Rightarrow [i_1,j_1] \trianglelefteq_\tau [i_2,j_2]$. 
Using this total order on the bars, we may generalise our notion of \textbf{ordered barcode bases} to zizag modules, by ordering the bars with the order $\trianglelefteq_\tau$.
We now have all the necessary tools to adapt our results from Section \ref{sec:stabiliser} to the case of zizag modules. 

\begin{theorem}\label{theorem:main}
	For each pair $[i,j]$ in $\set{0,1,\ldots,\ell}$ with $i \leq j$, let $d_{ij}$ be the multiplicity in the barcode of $(V_\bullet,p_\bullet, \tau)$, with the understanding that $d_{ij} = 0$ whenever $[i,j]$ is not in $\Barc(V_\bullet,p_\bullet, \tau)$. Then there is a bijection of sets:
	\[
	\Stab(A_\bullet) ~ \cong ~ \prod_{[i,j]}\GL(d_{ij};\mathbb{F})\times \hspace{-.2in} \prod_{[i_1,j_1] \precneq_\tau [i_2,j_2]} \hspace{-.2in} \text{\rm Mat}(d_{i_{1}j_{1} }\times d_{i_{2}j_{2}};\mathbb{F}).
	\]
\end{theorem}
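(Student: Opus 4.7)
The plan is to mirror the proof of Theorem \ref{theorem:stabbij}, adapting the key constraints to account for the direction of each arrow in the zigzag. First, I would reduce to the case where $A_\bullet$ arises from an ordered barcode basis (with bars ordered by $\trianglelefteq_\tau$), so that each $A_i$ is in barcode form at forward arrows and in reversed barcode form at backward arrows. The stabiliser condition then reads $g_i A_i = A_i g_{i-1}$ at forward arrows and $g_{i-1} A_i = A_i g_i$ at backward arrows. Expanding each such equality in coordinates yields analogues of constraints (1)--(3) from the proof of Theorem \ref{theorem:stabbij}, with the roles of rows and columns appropriately swapped at backward arrows.

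Next, I would associate to each interval $[a,b]$ in the barcode a sequence $\set{p_k : a \leq k \leq b}$ where consecutive positions $p_{k-1}, p_k$ are linked via the pivot of $A_k$, taking its form (barcode vs.\ reversed barcode) into account. The type-(3) constraints then yield the propagation identity
\[
g_k(p_k, q_k) = g_{k'}(p_{k'}, q_{k'}) \quad \text{for all } k, k' \in [i,j] := [i_1, j_1] \cap [i_2, j_2],
\]
for any bars $[i_1, j_1], [i_2, j_2]$ with associated sequences $\set{p_\bullet}, \set{q_\bullet}$; this holds uniformly, regardless of the directions of the arrows in $[i,j]$. The submatrix $g_{[i_1, j_1]}^{[i_2, j_2]}$ is then defined exactly as in Theorem \ref{theorem:stabbij}.

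The crux is then to show that $g_{[i_1, j_1]}^{[i_2, j_2]} = 0$ whenever $[i_1, j_1] \not\preceq_\tau [i_2, j_2]$. I would prove this by inspecting the two endpoints $i = \max(i_1, i_2)$ and $j = \min(j_1, j_2)$ of the intersection: failure of $\preceq_\tau$ at either endpoint must force the block to vanish. The case analysis at the left endpoint proceeds according to the direction of the arrow $i-1 \leftrightarrow i$ and the relative order of $i_1, i_2$. For example, if $i_1 < i_2 = i$ and $i-1 \leftarrow i$ is backward, then bar $[i_2, j_2]$ is just starting at $V_i$, so $q_i$ is not in the image of the pivot function of $A_i$, while $p_i$ is; the appropriate boundary constraint from $g_{i-1} A_i = A_i g_i$ then forces $g_i(p_i, q_i) = 0$. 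The three other sub-cases (and their right-endpoint analogues) correspond one-to-one with the clauses of the definition of $\preceq_\tau$.

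Finally, using the compatibility $\preceq_\tau \Rightarrow \trianglelefteq_\tau$, each $g_k$ is block upper-triangular with respect to the ordering by $\trianglelefteq_\tau$. Invertibility of $g_k$ then forces the diagonal blocks $g_{[i,j]}^{[i,j]}$ to lie in $\GL(d_{ij};\mathbb{F})$, while the off-diagonal blocks $g_{[i_1, j_1]}^{[i_2, j_2]}$ with $[i_1, j_1] \precneq_\tau [i_2, j_2]$ remain unconstrained, furnishing the desired bijection. The main obstacle is the endpoint case analysis: each of the four combinations of arrow directions at the two intersection endpoints must be reconciled individually with the matching clause of $\preceq_\tau$. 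Once that bookkeeping is done, everything else transfers essentially verbatim from the standard persistence setting.
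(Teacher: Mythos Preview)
Your proposal is correct and is precisely the approach the paper intends: the paper does not supply a separate proof of this theorem but simply states it after introducing the orders $\preceq_\tau$ and $\trianglelefteq_\tau$, remarking that ``we now have all the necessary tools to adapt our results from Section~\ref{sec:stabiliser}.'' Your outline carries out exactly that adaptation---reducing to an ordered barcode basis for $\trianglelefteq_\tau$, deriving the direction-dependent analogues of constraints (1)--(3), and verifying the vanishing claim by the endpoint case analysis encoded in the definition of $\preceq_\tau$---so it matches the paper's (implicit) argument.
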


 \begin{prop} Given a zizag module $(V_\bullet, p_\bullet, \tau)$ together with an ordered barcode basis $\B$ with matrix representation $A_\bullet$, the set of all ordered barcode bases is given by the orbit $\Stab(A_\bullet) \B$.
    \end{prop}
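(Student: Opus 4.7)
The plan is to follow the same three-step strategy used in the proof of Proposition \ref{prop:obbase}, with the necessary $\tau$-dependent modifications. Concretely, I would like to show that (i) any two ordered barcode bases of $(V_\bullet,p_\bullet,\tau)$ produce the same matrix representation $A_\bullet$, (ii) the group $G$ still acts freely and transitively on the set $\mathbb{B}$ of ordered bases, and (iii) the matrix-representation map is $G$-equivariant with respect to the zigzag action on $X_\tau$. Given these three ingredients, if $\B'$ is any other ordered barcode basis with $\B' = g\B$ for the unique $g \in G$ provided by (ii), then by (i) and (iii) we get $A_\bullet = A(\B')_\bullet = A(g\B)_\bullet = (gA)_\bullet$, forcing $g \in \Stab(A_\bullet)$; conversely, for any $h \in \Stab(A_\bullet)$, the basis $h\B$ has matrix representation $(hA)_\bullet = A_\bullet$, which is in zigzag barcode form, and the ordering of bars induced by the stabiliser action preserves the $\trianglelefteq_\tau$ block structure, so $h\B$ remains ordered.

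The first step is the crux, so I would spend most of the work there. I need a zigzag analogue of \eqref{eq:obform}: given the total order $\trianglelefteq_\tau$, I would sort the bars of the barcode decomposition lexicographically and take the basis vectors appearing in each $V_i$ in this order. The claim is that each matrix $A_i$ representing $p_i$ then assumes a canonical block-diagonal form with identity blocks of size $d_{ij}$ appearing in prescribed positions. Specifically, when $p_i$ is a forward arrow $V_{i-1} \xrightarrow{f_i} V_i$, the resulting $A_i$ is in barcode form, and when $p_i$ is backward, $A_i$ is in reversed barcode form; in both cases the positions of the pivots are completely determined by the $d_{ij}$'s together with the orders $\le_\tau$ and $\le^*_\tau$ used to build $\trianglelefteq_\tau$. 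This is where the $\tau$-dependent definition of ordering pays off: the fact that $\le_\tau$ runs forward through endpoints appearing as ``left endpoints in the forward direction'' and backward through those appearing in the backward direction is precisely what guarantees uniqueness of the pivots in each local block.

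Step (ii) carries over verbatim: the free transitive action of $\prod_i \GL(n_i;\F)$ on the collection of ordered bases of $\bigoplus_i V_i$ has nothing to do with the direction of the arrows $p_i$. Step (iii) requires only slightly more care. For a forward arrow $f_i$, the relation $A(g\B)_i = g_i A(\B)_i g_{i-1}^{-1}$ is the classical one; for a backward arrow $q_i : V_i \to V_{i-1}$, the corresponding formula is $A(g\B)_i = g_{i-1} A(\B)_i g_i^{-1}$, matching the zigzag $G$-action on $X_\tau$ that was defined in Section \ref{sec:zigzag}. With equivariance in hand, the deduction $A(\B')_\bullet = (gA)_\bullet$ is automatic.

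The main obstacle will be step (i), i.e., rigorously writing down the canonical form for $A_\bullet$ in the zigzag setting and verifying that $\trianglelefteq_\tau$ produces exactly this form. Once that is established, the remainder of the argument is essentially formal and mirrors Proposition \ref{prop:obbase} line by line.
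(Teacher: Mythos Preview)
Your proposal is correct and follows precisely the approach the paper intends: the paper states this proposition without proof, relying on the reader to carry over the argument of Proposition~\ref{prop:obbase} verbatim with the $\tau$-dependent modifications (the zigzag action on $X_\tau$ and the order $\trianglelefteq_\tau$), which is exactly what you have outlined. Your explicit identification of step~(i) as the only place requiring genuine new work, and your observation that steps~(ii) and~(iii) are formal once the zigzag $G$-action is in hand, matches the paper's implicit reasoning.
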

 \subsection{Maps of zizag modules}  
We now turn our attention to morphisms $(V_\bullet, p_\bullet, \tau) \to (W_\bullet, \tilde{p}_\bullet, \tau)$ between zigzag persistence modules of the same type $\tau$. Each such morphism is determined by linear maps $\phi_i : V_i \mapsto W_i$ along with the requirement that the evident squares commute. That is, if $p_i=f_i$ (and so $\tilde{p}_i=\tilde{f}_i$), we have $\phi_i \circ f_i=\tilde{f}_i \circ \phi_{i-1}$, and if $p_i=q_i$ (and so $\tilde{p}_i=\tilde{q}_i)$, we have $\phi_{i-1
} \circ q_i=\tilde{q}_i \circ \phi_i$.  This is best represented through the following diagram
\[
\xymatrixcolsep{.5in}
\xymatrixrowsep{.4in}
\xymatrix{
V_0 \ar@{<->}[r]^{p_1} \ar@{->}[d]_{\phi_0} & V_1 \ar@{<->}[r]^{p_2} \ar@{->}[d]_{\phi_1} & \cdots \ar@{<->}[r]^{p_{\ell-1}}  & V_{\ell-1} \ar@{<->}[r]^{p_\ell} \ar@{->}[d]_{\phi_{\ell-1}}& V_\ell \ar@{->}[d]_{\phi_{\ell}} \\
W_0 \ar@{<->}[r]_{ \tilde{p}_1} & W_1 \ar@{<->}[r]_{ \tilde{p}_2} & \cdots \ar@{<->}[r]_{ \tilde{p}_{\ell-1}} & W_{\ell-1} \ar@{<->}[r]_{ \tilde{p}_\ell} & W_\ell
}
\]
where each natural square commutes. 

As in Section \ref{sec:maps}, such maps may be identified as ladder persistence module on the rectangle quiver with orientation $\tau$. We denote such modules $(V_\bullet, W_\bullet, \phi_\bullet, \tau)$. We may analogously define a special class of of modules, namely $\textbf{R}_{\tau_{[i_1,j_1]_\bullet}}^{ [i_2,j_2]}$, $\Int _\tau ^{+}[i,j]_\bullet$ and $\Int _\tau ^{-}[i,j]_\bullet$. Here is the zigzag analogue of Definition \ref{def:nested}
\begin{definition}
    Given a type $\tau$ of zizag module, we say a bar $[i_2,j_2]$ is strictly nested in a bar $[i_1,j_1]$ with regards to $\tau$, denoted $[i_2,j_2] \subset_\tau [i_1,j_1]$, if they are non-intersecting with $[i_1,j_1] \trianglelefteq_\tau [i_2,j_2]$ but $[i_1,j_1] \not \preceq_\tau [i_2,j_2]$.
\end{definition}

As in Section \ref{sec:maps}, maps of zizag module may be compactly represented as block upper triangular matrices using ordered barcode basis for the source and target and zizag modules. Having excluded strictly nested bars, we are free to perform operations of type $(1)$, $(2)$ and $(3)$ \`{a} la Theorem \ref{theorem:nonest} and obtain the following result.
 \begin{theorem}\label{theorem:mainII}
      Let $(V_\bullet, W_\bullet, \phi_\bullet, \tau)$ be a ladder persistence module of length $\ell+1$ and type $\tau$, where neither $V_\bullet$ nor $W_\bullet$ admit a pair of strictly nested bars with regards to $\tau$.   Then there are integers $r_{[i_1,j_1]}^{ [i_2,j_2]}$, $d_{ij}^{\pm} > 0$ for which :
      \[(V_\bullet, W_\bullet, \phi_\bullet, \tau ) \simeq \bigoplus_{[i_1,j_1] \preceq_\tau [i_2,j_2]} (\text{\bf R}_{\tau_{[i_1,j_1]_\bullet}}^{ [i_2,j_2]})^{r_{[i_1,j_1]}^{ [i_2,j_2]}} \oplus \bigoplus_{i \leq j} (\Int_\tau ^{+}[i,j]_\bullet)^{d_{ij}^{+}} \oplus \bigoplus_{i \leq j} (\Int_\tau ^{-}[i,j]_\bullet)^{d_{ij}^{-}}
      \]
  \end{theorem}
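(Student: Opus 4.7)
The plan is to mirror the three-step proof of Theorem \ref{theorem:nonest}, replacing the order $\preceq$ by its zigzag analogue $\preceq_\tau$ and the lexicographic order $\trianglelefteq$ by $\trianglelefteq_\tau$, and invoking Theorem \ref{theorem:main} in place of Theorem \ref{theorem:stabbij}.

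First, I would fix ordered barcode bases $\B_V$ and $\B_W$ of the source and target zigzag modules with multiplicities $d_{ij}^V$, $d_{ij}^W$. For every index $k$ in the intersection of two bars $[i_1,j_1]$ and $[i_2,j_2]$, the naturality squares defining the morphism $\phi_\bullet$ force the relevant coefficient $\phi_k(p_k,q_k)$ to be constant in $k$ (because each intermediate map is an identity on the interval-module summand). A case check on the direction of the arrow at each endpoint shows that this constant is zero unless $[i_1,j_1] \preceq_\tau [i_2,j_2]$; the definition of $\preceq_\tau$ was designed precisely to capture when a non-trivial morphism $\Int_\tau[i_2,j_2]_\bullet \to \Int_\tau[i_1,j_1]_\bullet$ exists. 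Consequently, the whole of $\phi_\bullet$ is encoded in a single block matrix $b(\B_V,\B_W)$ with blocks $X_{[i_1,j_1]}^{[i_2,j_2]}$ of size $d_{i_1j_1}^W \times d_{i_2j_2}^V$, which is block upper-triangular with respect to $\trianglelefteq_\tau$ (using that $[i_1,j_1] \preceq_\tau [i_2,j_2]$ implies $[i_1,j_1] \trianglelefteq_\tau [i_2,j_2]$).

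Second, I would apply Theorem \ref{theorem:main} to describe the admissible basis changes. Given $(h,k) \in \Stab(A(\B_V)_\bullet) \times \Stab(A(\B_W)_\bullet)$, the matrix $b(h\B_V,k\B_W)$ equals $k \cdot b(\B_V,\B_W) \cdot h^{-1}$, and the stabiliser description expresses $h$ and $k$ as block upper-triangular matrices with invertible diagonal blocks $h_{[i,j]}^{[i,j]}$, $k_{[i,j]}^{[i,j]}$, and arbitrary off-diagonal blocks $h_{[i_1,j_1]}^{[i_2,j_2]}$, $k_{[i_1,j_1]}^{[i_2,j_2]}$ for every $[i_1,j_1] \precneq_\tau [i_2,j_2]$. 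This yields the same three families of legal operations on $b(\B_V,\B_W)$ as in Theorem \ref{theorem:nonest}: (1) arbitrary invertible changes inside a diagonal block $X_{[i,j]}^{[i,j]}$, (2) adding a $[i_1,j_1]$-column block into an $[i_2,j_2]$-column block when $[i_1,j_1] \preceq_\tau [i_2,j_2]$, and (3) the analogous row operation on the target side.

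Third, I would reduce the block matrix to the desired normal form. Here the zigzag-adapted non-nestedness assumption enters decisively: by construction of $\subset_\tau$, any two intersecting bars $[a,b] \trianglelefteq_\tau [c,d]$ satisfy either $[a,b] \preceq_\tau [c,d]$ or $[a,b] \subset_\tau [c,d]$, so the hypothesis of the theorem guarantees that all intersecting bars in $V_\bullet$ (respectively $W_\bullet$) are comparable under $\preceq_\tau$. Processing the blocks in column-blocks from left to right and, within each column-block, from the diagonal upward (exactly as in the proof of Theorem \ref{theorem:nonest}), the operations of types (2) and (3) suffice to clear every already-pivoted row and column of the current block $X_{[i_1,j_1]}^{[i_2,j_2]}$; then an application of operations of type (1), i.e.\ Smith normal form on the diagonal blocks $X_{[i,j]}^{[i,j]}$ together with the change of basis coming from $h_{[i_2,j_2]}^{[i_2,j_2]}$ and $k_{[i_1,j_1]}^{[i_1,j_1]}$, reduces $X_{[i_1,j_1]}^{[i_2,j_2]}$ to a matrix with at most one $1$ per row and column. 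Reading off pivots gives the summands $\textbf{R}_{\tau_{[i_1,j_1]_\bullet}}^{[i_2,j_2]}$, and the remaining zero columns/rows give the summands $\Int_\tau^+[i,j]_\bullet$ and $\Int_\tau^-[i,j]_\bullet$.

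The main obstacle I anticipate is bookkeeping rather than any genuinely new idea: verifying carefully, for each of the four local zigzag patterns at an endpoint, that the condition ``nonzero coefficient of $\phi$ between $\Int_\tau[i_2,j_2]_\bullet$ and $\Int_\tau[i_1,j_1]_\bullet$'' really is captured by $[i_1,j_1] \preceq_\tau [i_2,j_2]$, and checking that the order $\trianglelefteq_\tau$ is indeed compatible with $\preceq_\tau$ in all four cases so that the column-block/row-block sweep of Step 3 only ever uses already-processed blocks. Once these compatibilities are confirmed, the proof is a direct transcription of Theorem \ref{theorem:nonest}.
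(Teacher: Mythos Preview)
Your proposal is correct and matches the paper's approach exactly: the paper does not give a separate proof of this theorem but simply remarks that, having represented $\phi_\bullet$ as a block upper-triangular matrix in ordered barcode bases and excluded $\tau$-nested bars, one performs operations of type (1), (2), (3) \`a la Theorem \ref{theorem:nonest} to obtain the result. Your outline is in fact more detailed than what the paper provides, and the bookkeeping concerns you flag (the endpoint case check for $\preceq_\tau$ and the compatibility of $\trianglelefteq_\tau$ with $\preceq_\tau$) are precisely the verifications the paper leaves implicit.
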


	\bibliographystyle{alpha}
	\bibliography{refs}

\end{document}